\newtheorem{thm}{Theorem}[section] 
\newtheorem{cor}[thm]{Corollary}
\newtheorem{lem}[thm]{Lemma}
\newtheorem{prop}[thm]{Proposition}
\theoremstyle{definition}
\newtheorem{rem}[thm]{Remark}
\newcommand\operA[2]{{\if!#2!\operatorname{#1}\else{\operatorname{#1}_{#2}^{\phantom{I}}}\fi}} 
\newcommand{\Trace}[1][]{\if!#1!\operatorname{Tr}\else{\operatorname{Tr}_{#1}^{\phantom{I}}}\fi} 
\long\def\forget#1\forgotten{{}} %
\def\({\left(}
\def\){\right)}
\newcommand\MPf[1]{{\left<\left<{#1}\right]\right]}} 
\newcommand\LAY[3][]{{\begin{array}{c}\mbox{#2} \if#1!{}\else{+}\fi \\ \mbox{#3}\end{array}}}
\def\ps@pprintTitle{%
 \let\@oddhead\@empty
 \let\@evenhead\@empty
 \def\@oddfoot{}%
 \let\@evenfoot\@oddfoot}
\newcommand{\bigperp}{%
  \mathop{\mathpalette\bigp@rp\relax}%
  \displaylimits
}
\newcommand{\bigp@rp}[2]{%
  \vcenter{
    \m@th\hbox{\scalebox{\ifx#1\displaystyle2.1\else1.5\fi}{$#1\perp$}}
  }%
}
\newcommand{\Om}{\Omega}
\newcommand{\wg}{\wedge}
\newcommand{\bwg}{\bigwedge}
\renewcommand{\geq}{\geqslant}
\renewcommand{\leq}{\leqslant}
\DeclareMathOperator{\coker}{coker}
\newif\iffurther
\journal{??}
\begin{document}
\begin{frontmatter}

\title{The $u^n$-invariant and the Symbol Length of $H_2^n(F)$}

\author{Adam Chapman}
\ead{adam1chapman@yahoo.com}
\address{Department of Computer Science, Tel-Hai Academic College, Upper Galilee, 12208 Israel}
\author{Kelly McKinnie}
\ead{kelly.mckinnie@mso.umt.edu}
\address{Department of Mathematics, University of Montana, Missoula, MT 59812, USA}

\begin{abstract}
Given a field $F$ of $\operatorname{char}(F)=2$, we define $u^n(F)$ to be the maximal dimension of an anisotropic form in $I_q^n F$. For $n=1$ it recaptures the definition of $u(F)$. We study the relations between this value and the symbol length of $H_2^n(F)$, denoted by $sl_2^n(F)$. We show for any $n \geq 2$ that if $2^n \leq u^n(F) \leq u^2(F) < \infty$ then $sl_2^n(F) \leq \prod_{i=2}^n (\frac{u^i(F)}{2}+1-2^{i-1})$. As a result, if $u(F)$ is finite then $sl_2^n(F)$ is finite for any $n$, a fact which was previously proven when $\operatorname{char}(F) \neq 2$ by Saltman and Krashen. We also show that if $sl_2^n(F)=1$ then $u^n(F)$ is either $2^n$ or $2^{n+1}$.
\end{abstract}

\begin{keyword}
Kato-Milne Cohomology, Quadratic Forms, Symbol Length, $u$-Invariant
\MSC[2010] 11E81 (primary); 11E04, 12G05 (secondary)
\end{keyword}
\end{frontmatter}

\section{Introduction}

The Kato-Milne cohomology groups $H_p^n(F)$ of a field $F$ of $\operatorname{char}(F)=p$ are generated by decomposable differential forms. The symbol length of an element in $H_p^n(F)$ is the minimal number of decomposable differential forms required to express it. The symbol length $sl_p^n(F)$ of $H_p^n(F)$ is the supremum on the symbol lengths of all its elements.
When $\operatorname{char}(F)=2$, $H_2^n(F) \cong I_q^n F/I_q^{n+1} F$ for any $n$ by \cite{Kato:1982}, where $I_q^n F$ is the subgroup of $W_q F$ generated by scalar multiples of quadratic $n$-fold Pfister forms (see \cite{EKM} for reference.)
The $u$-invariant of $F$, denoted by $u(F)$, is defined to be the maximal dimension of nonsingular anisotropic forms over $F$. Let $u^n(F)$ denote the maximal dimension of an anisotropic form in $I_q^n F$. It follows that $u^1(F)=u(F)$ and for any $i<j$, $u^i(F) \geq u^j(F)$.

For $n=1$,  $H^1_2(F) \cong F/\wp(F)$, and so $sl_2^1(F)=1$. For $n=2$, $H^2_2(F) \cong \prescript{}{2}Br(F)$ and $sl_2^2(F)$ is the symbol length of central simple $F$-algebras of exponent 2. By \cite[Corollary 4.2]{Chapman:2017}, $sl^2_2(F) \leq \frac{u(F)}{2}-1$.
In this paper, we prove for any $n \geq 2$ that if $2^n \leq u^n(F) \leq u^2(F) < \infty$ then $sl_2^n(F) \leq \prod_{i=2}^n (\frac{u^i(F)}{2}+1-2^{i-1})$, and as a result, if $u(F)$ is finite then $sl_2^n(F)$ is finite for any $n$.
The equivalent result in the case of $\operatorname{char}(F) \neq 2$ was recently obtained in \cite{Saltman-higher-degrees} and \cite{Krashen:2016}.
We also show that if $n \geq 2$ and $sl_2^n(F)=1$ then $u^n(F)$ is either $2^n$ or $2^{n+1}$ and relate it to formerly obtained results from \cite{ElmanLam:1973}, \cite{Baeza:1982}, \cite{Kashima:1996} and \cite{ChapmanDolphin:2017}.

\section{Preliminaries}
Given a prime number $p$ and a field $F$ of $\operatorname{char}(F)=p$, $\Om_F^1$ is the space of absolute differential forms defined to be the
$F$-vector space generated by the symbols $da$ subject to the relations $d(a+b)=da+db$ and $d(ab)=adb+bda$ for any $a,b \in F$.
The space of $n$-differential forms $\Om_F^n$ for any positive integer $n$ is then defined by the
$n$-fold exterior power 
$\Om_F^n=\bwg^n(\Om_F^1)$, which is consequently an $F$-vector space spanned
by  $da_1\wg\ldots\wg da_n$, $a_i\in F$. The derivation $d$ extends to an 
operator $d\,:\,\Om_F^n \to \Om_F^{n+1}$ by $d(a_0da_1\wg\ldots\wg da_n)=
da_0\wg da_1\wg\ldots\wg da_n$.  We define $\Om_F^0=F$, $\Om_F^n=0$ for $n<0$, and
$\Om_F=\bigoplus_{n\geq 0}\Om_F^n$,  the algebra of differential forms
over $F$ with multiplication naturally defined by
$$(a_0da_1\wg\ldots\wg da_n)(b_0db_1\wg\ldots\wg db_m)=
a_0b_0da_1\wg\ldots\wg da_n\wg db_1\wg\ldots\wg db_m\,.$$ 

There exists a well-defined group homomorphism $\Om_F^n\to \Om_F^n/d\Om_F^{n-1}$, the
Artin-Schreier map $\wp$, which acts on decomposable differential forms as follows:
$$\alpha\frac{d \beta_1}{\beta_1}\wg\ldots\wg \frac{d \beta_n}{\beta_n}\,\longmapsto\,
(\alpha^p-\alpha)\frac{d \beta_1}{\beta_1}\wg\ldots\wg \frac{d \beta_n}{\beta_n}.$$
The group $H_p^{n+1}(F)$ is defined to be $\coker(\wp)$.

By \cite[Section 9.2]{GilleSzamuely:2006}, when $n=2$, there exists an isomorphism 
\begin{eqnarray*}
H_p^2(F) &{\longrightarrow}& \prescript{}{p} Br(F), \enspace \text{given by}\\
\alpha \frac{d\beta}{\beta} &\longmapsto & [\alpha,\beta)_{p,F},
\end{eqnarray*}
where $\prescript{}{p} Br(F)$ is the $p$-torsion part of the group of Brauer classes of central simple algebras over $F$, and $[\alpha,\beta)_{p,F}$ is the degree $p$ cyclic $p$-algebra 
$$F \langle x,y : x^p-x=\alpha, y^p=\beta, y x y^{-1}=x+1 \rangle.$$

For $p=2$, the groups $H_2^n(F)$ are strongly connected to the theory of quadratic forms. 
Recall that any nonsingular quadratic form can be written as $\beta_1 [1,\alpha] \perp \dots \perp \beta_n [1,\alpha_n]$ for some $\alpha_1,\dots,\alpha_n \in F$ and $\beta_1,\dots,\beta_n \in F^\times$, where $[1,\alpha]$ stands for the two-dimensional form $u^2+uv+\alpha_i v^2$, and $\perp$ is the orthogonal sum. We call the form $[1,\alpha]$ a quadratic 1-fold Pfister form and denote it by $\langle \langle \alpha]]$.
For any integer $n \geq 2$, we define the quadratic $n$-fold Pfister form $\MPf{ \beta_{n-1},\dots,\beta_{1},\alpha}$ recursively to be $\MPf{ \beta_{n-2},\dots,\beta_{1},\alpha} \perp \beta_{n-1} \MPf{ \beta_{n-2},\dots,\beta_{1},\alpha}$.
We denote by $P_n(F)$ the set of $n$-fold Pfister forms.
A quadratic Pfister form is isotropic if and only if it is hyperbolic.
We define $I_q^n F$ to be the subgroup of $I_q^1 F$ generated by the scalar multiples of quadratic $n$-fold Pfister forms. 
By \cite{Kato:1982}, there exists an epimorphism
\begin{eqnarray*}
e^{n+1} : I_q^{n+1}(F) &{\longrightarrow} & H_2^{n+1}(F), \enspace \text{given by}\\
 \langle \langle \beta_1,\dots,\beta_n,\alpha]]
& \longmapsto &  \alpha \frac{d \beta_1}{\beta_1}\wg\ldots\wg \frac{d \beta_n}{\beta_n} 
\end{eqnarray*}
with $\ker(e^{n+1})=I_q^{n+2}(F)$.
These maps are called the ``cohomological invariants".
The only two cohomological invariants that have an explicit formulation are $e^1$ and $e^2$.
The invariant $e^1$ is known as the ``Arf invariant", denoted by $\operatorname{Arf}(\dots)$. Given a form $\varphi=b_1 [1,a_1] \perp \dots \perp b_n [1,a_n]$ in $I_q F$, its Arf invariant can be computed using the formula
$\operatorname{Arf}(\varphi)=\sum_{i=1}^n a_i$.
Given any form $\varphi=b_1[1,a_1] \perp \dots \perp b_n [1,a_n]$ in $I_q F$, its Clifford algebra is computed using the formula
$C\ell(\varphi)=\bigotimes_{i=1}^n [a_i,b_i)_{2,F}$. The second cohomological invariant $e^2$ is actually the restriction $e^2=C\ell|_{I_q^2 F}$ and therefore is nicknamed the ``Clifford invariant".

\section{Symbol Length of $H_2^n(F)$}

In this section we provide an explicit upper bound for $sl_2^n(F)$ given $u^2(F),\dots,u^n(F)$. The main tool we use is Aravire and Laghribi's description of the Witt kernel of a multi-quadratic purely inseparable field extension \cite[Proposition 2]{AravireLaghribi:2013}. This method only works in characteristic 2 because of the existence of purely inseparable quadratic splitting fields.
We start with the following observation on fields $F$ of $\operatorname{char}(F)=p$ and finite $p$-rank:

\begin{rem}\label{prank} Let $F$ be a field with finite $p$-rank $m$ (i.e. $[F:F^p]=p^m$). Then for any $n$, $sl_p^{n+1}(F) \leq \binom{m}{n}$.
\label{l1}
\end{rem}

\begin{proof} Since $F$ has $p$-rank $m$, there is a differential basis $\{da_1,\ldots,da_m\}$ of $\Omega^1_F$ and $\Omega^n_F$ has differential basis $\{da_{j_1}\wedge\cdots\wedge da_{j_n}\}_{1\leq j_1<j_2<\cdots<j_n\leq m}$. There are $\binom mn$ elements in this basis, showing that each element in $\Omega^n_F$, and therefore also in $H^{n+1}_p(F)$, can be written as a sum of at most $\binom mn$ symbols.
\end{proof}

\begin{prop}[{\cite[Proposition 2]{AravireLaghribi:2013}}]\label{LagKing}
Let $F$ be a field of $\operatorname{char}(F)=2$, $b_1,\dots,b_\ell \in F^\times$ and let $K=F[\sqrt{b_1},\dots,\sqrt{b_\ell}]$ be a field extension of $F$. If a class in $H_2^n(F)$ has a trivial restriction to $H_2^n(K)$ then it can be written as $\sum_{i=1}^\ell \omega_i \wedge \frac{d b_i}{b_i}$ where $\omega_1,\dots,\omega_\ell \in H_2^{n-1}(F)$.
\end{prop}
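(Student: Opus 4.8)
The plan is to prove this --- which is \cite[Proposition~2]{AravireLaghribi:2013} --- by induction on $\ell$, reducing everything to the case $\ell=1$ of a single purely inseparable quadratic extension $F[\sqrt b]/F$, which carries the whole arithmetic content. First I would clear away degeneracies: if $b_i\in F^2$ then $\frac{db_i}{b_i}=0$ and $\sqrt{b_i}\in F$, so that index may be dropped, and after such reductions one may assume $b_1,\dots,b_\ell$ are multiplicatively independent modulo squares, so that $[K:F]=2^\ell$.

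For the base case $K=F[\sqrt b]$, $b\notin F^2$, the input I would quote is Kato's computation of the differentials of a purely inseparable quadratic extension \cite{Kato:1982}: with $c=\sqrt b$ one has $\Omega^1_K\cong(\Omega^1_F/F\,db)\otimes_F K\oplus K\,dc$, and taking exterior powers and passing to $\wp$-cokernels yields the two facts actually needed. The first is (A): $\ker\bigl(\res[K/F]\colon H_2^n(F)\to H_2^n(K)\bigr)=\{\,\omega\wedge\tfrac{db}{b}:\omega\in H_2^{n-1}(F)\,\}$, which is already the $\ell=1$ case of the statement. The second is (B): $H_2^n(K)=\res[K/F]\bigl(H_2^n(F)\bigr)\oplus\res[K/F]\bigl(H_2^{n-1}(F)\bigr)\wedge\tfrac{dc}{c}$ as an internal direct sum, recording that the ``unramified'' part of $H_2^n(K)$ is a direct summand.

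For the inductive step ($\ell\geq2$) I would set $L=F[\sqrt{b_1}]$, so $K=L[\sqrt{b_2},\dots,\sqrt{b_\ell}]$, and take $\eta\in H_2^n(F)$ with $\res[K/F](\eta)=0$. Since $\res[K/F]=\res[K/L]\circ\res[L/F]$, the class $\res[L/F](\eta)$ lies in $\ker(\res[K/L])$, so by the induction hypothesis over $L$ one may write $\res[L/F](\eta)=\sum_{i=2}^{\ell}\mu_i\wedge\frac{db_i}{b_i}$ with $\mu_i\in H_2^{n-1}(L)$. Writing each $\mu_i=\res[L/F](\nu_i)+\res[L/F](\rho_i)\wedge\frac{dc}{c}$ by (B) over $L$ (with $c=\sqrt{b_1}$, $\nu_i\in H_2^{n-1}(F)$, $\rho_i\in H_2^{n-2}(F)$), reorganising and using that restriction respects products and that $\frac{db_i}{b_i}$ may be commuted past $\frac{dc}{c}$ in characteristic $2$, one obtains
$$\res[L/F](\eta)=\res[L/F]\Bigl(\sum_{i=2}^{\ell}\nu_i\wedge\tfrac{db_i}{b_i}\Bigr)+\res[L/F]\Bigl(\sum_{i=2}^{\ell}\rho_i\wedge\tfrac{db_i}{b_i}\Bigr)\wedge\tfrac{dc}{c}.$$
The left side and the first summand on the right lie in $\res[L/F]\bigl(H_2^n(F)\bigr)$, so by the directness in (B) the second summand must vanish; hence $\eta-\sum_{i=2}^{\ell}\nu_i\wedge\frac{db_i}{b_i}\in\ker(\res[L/F])$, which by (A) equals $\{\,\omega_1\wedge\tfrac{db_1}{b_1}:\omega_1\in H_2^{n-1}(F)\,\}$. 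Choosing such an $\omega_1$ and putting $\omega_i=\nu_i$ for $i\geq2$ yields $\eta=\sum_{i=1}^{\ell}\omega_i\wedge\frac{db_i}{b_i}$, as wanted.

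The hard part lies entirely in the base case: extracting (A) and (B) from the structure of $\Omega^\bullet_K$ for a purely inseparable quadratic $K/F$ together with the action of the Artin--Schreier map --- that is, Kato's theorem --- and I would simply cite it rather than reprove it. Once that is granted, the multi-quadratic case is the routine induction above; the single subtlety to bear in mind is that $\res[L/F]$ need not be injective, which is precisely why (B) has to be phrased as a splitting of $H_2^n(K)$ into the \emph{image} of $\res[L/F]$ together with a complement, rather than into a copy of $H_2^n(F)$.
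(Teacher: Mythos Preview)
The paper does not prove this proposition at all: it is quoted verbatim from \cite[Proposition~2]{AravireLaghribi:2013} and then used as a black box in the proof of Theorem~\ref{GoodBound}. There is therefore no argument in the paper to compare your sketch against.

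That said, your inductive plan is the natural one and is in the spirit of how Aravire and Laghribi argue. Fact (A) --- the single-variable kernel computation --- is standard and does go back to Kato. Fact (B), however, is the point you should pin down precisely before relying on it. The assertion that $H_2^n(L)$ splits as an \emph{internal direct sum} $\res[L/F]\bigl(H_2^n(F)\bigr)\oplus\res[L/F]\bigl(H_2^{n-1}(F)\bigr)\wedge\tfrac{dc}{c}$ is stronger than what one usually finds stated; the literature more commonly gives an exact sequence or filtration rather than a splitting. Note also that your inductive step actually invokes two different instances of (B): a \emph{generation} statement in degree $n-1$ (to decompose each $\mu_i$) and a \emph{disjointness} statement in degree $n$ (to conclude that the $\tfrac{dc}{c}$-part vanishes). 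It would be cleaner --- and safer --- to isolate exactly these two ingredients and give a precise reference for each, rather than packaging them as a single direct-sum claim. If both hold as you state them, your induction goes through verbatim; if only weaker versions hold, the step may need rerouting through the exact sequences that Kato and Aravire--Laghribi actually establish.
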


\begin{lem}\label{Split}
Let $n \geq 2$ and $\varphi=b_1 [1,a_1] \perp \dots \perp b_{m-1} [1,a_{m-1}] \perp [1,a_1+\dots+a_{m-1}]$ be an anisotropic form of dimension $2m$ in $I_q^n F$ where $2m \geq 2^n$.
Then $\varphi_K$ is hyperbolic for $K=F[\sqrt{b_1},\dots,\sqrt{b_\ell}]$ where $\ell=m+1-2^{n-1}$.
\end{lem}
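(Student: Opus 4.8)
The plan is to extend scalars to $K$, exploit that the adjoined $b_i$ become squares there in order to trivialise $\ell$ of the binary summands of $\varphi$, collapse the resulting ``unit-scalar'' part, and conclude by the Arason--Pfister Hauptsatz for $I_q^n$. First I would note that over $K$ each of $b_1,\dots,b_\ell$ is a square, so $b_i[1,a_i]\cong[1,a_i]$ over $K$ for $1\le i\le\ell$, and hence
\[
\varphi_K\cong[1,a_1]\perp\cdots\perp[1,a_\ell]\perp[1,a_1+\cdots+a_{m-1}]\perp b_{\ell+1}[1,a_{\ell+1}]\perp\cdots\perp b_{m-1}[1,a_{m-1}].
\]
The point of the prescribed shape of $\varphi$ is that the summands with scalar $1$ now number $\ell+1$, namely $[1,a_1],\dots,[1,a_\ell]$ together with $[1,a_1+\cdots+a_{m-1}]$.

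Next I would use the elementary identity $[1,x]\perp[1,y]\cong[1,x+y]\perp\mathbb{H}$, equivalently $[1,x]+[1,y]=[1,x+y]$ in $W_qF$: the vector $(1,0,1,0)$ is isotropic in characteristic $2$, so $[1,x]\perp[1,y]$ is isotropic, and completing it to a hyperbolic pair splits off an $[1,x+y]$ (this is standard, cf.\ \cite{EKM}). Applying this repeatedly to the $\ell+1$ unit-scalar summands shows that $\varphi_K$ is Witt-equivalent to
\[
\psi:=[1,a_{\ell+1}+\cdots+a_{m-1}]\perp b_{\ell+1}[1,a_{\ell+1}]\perp\cdots\perp b_{m-1}[1,a_{m-1}],
\]
a form of dimension $2+2(m-1-\ell)=2(m-\ell)=2(2^{n-1}-1)=2^n-2$, where I have substituted $\ell=m+1-2^{n-1}$; here the angles collapse because $(a_1+\cdots+a_\ell)+(a_1+\cdots+a_{m-1})=a_{\ell+1}+\cdots+a_{m-1}$ in characteristic $2$.

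Finally, since $\varphi\in I_q^nF$ and scalar extension sends scalar multiples of $n$-fold Pfister forms to scalar multiples of $n$-fold Pfister forms, $\varphi_K\in I_q^nK$, so the Witt class of $\psi$, and hence of its anisotropic part $\an{\psi}$, lies in $I_q^nK$. By the Arason--Pfister Hauptsatz (valid in characteristic $2$, see \cite{EKM}) a nonzero anisotropic form with Witt class in $I_q^n$ has dimension at least $2^n$; since $\dim\an{\psi}\le\dim\psi=2^n-2<2^n$, we get $\an{\psi}=0$, so $\psi$ and therefore $\varphi_K$ are hyperbolic. I do not expect a genuine obstacle here: the content is the observation that the given $\varphi$ and the given $\ell$ are calibrated so that over $K$ the unit-scalar part of $\varphi$ collapses (by additivity in the last Pfister slot) to leave a form of dimension exactly $2^n-2$, one short of the Hauptsatz bound. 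The only points needing a little care are verifying $\varphi_K\in I_q^nK$ and noting $1\le\ell\le m-1$, which are exactly the hypotheses $2^n\le 2m$ and $n\ge 2$.
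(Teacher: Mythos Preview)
Your proof is correct and follows essentially the same route as the paper: extend to $K$ so that $b_1,\dots,b_\ell$ become squares, use the additivity $[1,x]\perp[1,y]\sim_{\mathrm{Witt}}[1,x+y]$ to reduce $\varphi_K$ to a form of dimension $2^n-2$ in $I_q^n K$, and then invoke the Arason--Pfister Hauptsatz. The only difference is that you spell out the collapsing step and the membership $\varphi_K\in I_q^nK$ more explicitly than the paper does.
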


\begin{proof}
Since for each $i \in \{1,\dots,\ell\}$, $b_i$ is a square in $K$ and $[1,a_1] \perp \dots \perp [1,a_\ell] \perp [1,a_1+\dots+a_{m-1}] \sim_{Witt} [1,a_{\ell+1}+\dots+a_{m-1}]$, we have $\varphi_K \sim_{Witt} b_{\ell+1} [1,a_{\ell+1}] \perp \dots \perp [1,a_{m-1}] \perp [1,a_{\ell+1}+\dots+a_{m-1}]$. The latter is a form of dimension $2m-2(m+1-2^{n-1})=2^n-2$ in $I_q^n K$, hence it is hyperbolic by the Hauptsatz theorem \cite[Theorem 23.8]{EKM}. 
\end{proof}

\begin{thm}\label{GoodBound}
Let $F$ be a field of $\operatorname{char}(F)=2$.
Then for every $n \geq 2$, if $2^n \leq u^n(F) < \infty$ then $sl_2^n(F) \leq (\frac{u^n(F)}{2}+1-2^{n-1}) sl_2^{n-1}(F)$. Consequently, if in addition $u^2(F) < \infty$ then $sl_2^n(F) \leq \prod_{i=2}^n (\frac{u^i(F)}{2}+1-2^{i-1})$.
\end{thm}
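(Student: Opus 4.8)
The plan is to prove the first inequality $sl_2^n(F) \leq (\tfrac{u^n(F)}{2}+1-2^{n-1})\,sl_2^{n-1}(F)$ by bounding the symbol length of an arbitrary class $\eta \in H_2^n(F)$, and then to obtain the product formula by an immediate induction on $n$, using $sl_2^2(F) \leq \tfrac{u^2(F)}{2}-1$ from \cite[Corollary 4.2]{Chapman:2017} as the base case (note $\tfrac{u^2(F)}{2}+1-2^{2-1} = \tfrac{u^2(F)}{2}-1$, so the bases match). So the entire content is the single-step inequality.

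For the single step, fix $\eta \in H_2^n(F)$. Since $e^n : I_q^n F \to H_2^n(F)$ is surjective with kernel $I_q^{n+1}F$, lift $\eta$ to a class in $I_q^n F$ and choose an \emph{anisotropic} representative $\varphi$ of that Witt class; anisotropy forces $\dim \varphi \leq u^n(F)$. Writing $\varphi = b_1[1,a_1] \perp \cdots \perp b_m[1,a_m]$ with $2m = \dim\varphi \leq u^n(F)$, the fact that $\varphi \in I_q^n F \subseteq I_q^1 F$ means $\operatorname{Arf}(\varphi)=0$, i.e. $a_1+\cdots+a_m=0$, so after replacing $a_m$ we may assume $\varphi$ has exactly the shape required by \Lref{Split}: $\varphi = b_1[1,a_1] \perp \cdots \perp b_{m-1}[1,a_{m-1}] \perp [1,a_1+\cdots+a_{m-1}]$, with $2m \leq u^n(F)$ and, by hypothesis, $2m \geq 2^n$ (if $\dim\varphi < 2^n$ then $\varphi$ is hyperbolic by the Hauptsatz and $\eta=0$, which is trivially a sum of $\leq$ the claimed number of symbols). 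Now \Lref{Split} gives $\varphi_K$ hyperbolic for $K = F[\sqrt{b_1},\dots,\sqrt{b_\ell}]$ with $\ell = m+1-2^{n-1} \leq \tfrac{u^n(F)}{2}+1-2^{n-1}$, hence $\eta_K = e^n(\varphi_K) = 0$ in $H_2^n(K)$. Apply \Pref{LagKing}: $\eta = \sum_{i=1}^{\ell} \omega_i \wedge \tfrac{db_i}{b_i}$ with $\omega_i \in H_2^{n-1}(F)$. Each $\omega_i$ is a sum of at most $sl_2^{n-1}(F)$ decomposable $(n-1)$-forms, and wedging each such summand with $\tfrac{db_i}{b_i}$ produces a decomposable $n$-form (a symbol); therefore $\eta$ is a sum of at most $\ell\cdot sl_2^{n-1}(F) \leq (\tfrac{u^n(F)}{2}+1-2^{n-1})\,sl_2^{n-1}(F)$ symbols, as desired.

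The main obstacle — really the only subtle point — is ensuring the chosen anisotropic representative $\varphi$ genuinely has the normal form demanded by \Lref{Split}, namely that its last slot is exactly $[1,a_1+\cdots+a_{m-1}]$ (coefficient $1$, and the prescribed Arf-compensating entry). This is where the hypothesis $\varphi \in I_q^n F \subseteq I_q^1 F$ is used: membership in $I_q^1 F$ is equivalent to $\dim\varphi$ even and $\operatorname{Arf}(\varphi)=0$, and an isometry $b[1,a] \cong b[1,a+c]\cdot(\text{adjust})$ together with scaling identities for $2$-dimensional forms lets one absorb the nonzero scalar in the last slot and force the first-type entry to be $1$; one must check this rewriting does not disturb membership in $I_q^n F$ (it does not, since we are only replacing $\varphi$ by an isometric, hence Witt-equivalent, form) and does not change $\eta = e^n(\varphi)$. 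One should also double-check the edge case $2m = 2^n$: then $\ell = 1$, and the bound reads $sl_2^n(F) \leq sl_2^{n-1}(F)$, consistent with \Lref{Split} giving a single-variable splitting field. The passage from the one-step bound to $\prod_{i=2}^n(\tfrac{u^i(F)}{2}+1-2^{i-1})$ then requires only that $u^i(F) \geq 2^i$ for all $2 \leq i \leq n$, which follows from $2^n \leq u^n(F) \leq u^i(F)$ together with the hypothesis $u^n(F) \leq u^2(F) < \infty$ controlling finiteness, so every intermediate $sl_2^i(F)$ is finite and the induction closes.
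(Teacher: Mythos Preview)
Your overall strategy is exactly the paper's: lift to an anisotropic representative in $I_q^n F$, use \Lref{Split} to find a small purely inseparable multiquadratic splitting field, invoke \Pref{LagKing}, and count symbols. The induction down to $sl_2^1(F)=1$ (equivalently, to the $n=2$ bound) is also the same.

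There is, however, a genuine gap at precisely the point you flag as ``the main obstacle.'' You cannot in general force the last coefficient $b_m$ to equal $1$ by an \emph{isometry}: for instance, if $\varphi$ does not represent $1$ over $F$, then no orthogonal summand of $\varphi$ can be isometric to $[1,\,\cdot\,]$. (Also, membership in $I_q^1 F=W_q F$ does not give $\operatorname{Arf}(\varphi)=0$; that is the characterization of $I_q^2 F$. This is harmless since $n\geq 2$.) The correct move, and the one the paper uses, is not an isometry at all: once $\varphi = b_1[1,a_1]\perp\dots\perp b_{m-1}[1,a_{m-1}]\perp c\,[1,a_1+\dots+a_{m-1}]$, replace $\varphi$ by $c^{-1}\varphi$. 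Since $\varphi\perp c^{-1}\varphi$ is Witt equivalent to $c^{-1}\langle\langle c\rangle\rangle\otimes\varphi\in I_q^{n+1}F$, the two forms have the same image under $e^n$, so $\eta$ is unchanged; and $c^{-1}\varphi$ is still anisotropic of the same dimension with last slot $[1,a_1+\dots+a_{m-1}]$, so \Lref{Split} applies. With this one-line fix your argument goes through and coincides with the paper's proof.
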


\begin{proof}
Consider a class $C$ in $H_2^n(F)$.
It can be represented by an anisotrpic form $\varphi$ of dimension $2m$ in $I_q^n F$ where $m \leq \frac{u^n(F)}{2}$.
Since $n \geq 2$, $\varphi$ is in $I_q^2 F$ and can be written as $b_1 [1,a_1] \perp \dots \perp b_{m-1} [1,a_{m-1}] \perp c [1,a_1+\dots+a_{m-1}]$ for some $b_1,\dots,b_{m-1},c \in F^\times$ and $a_1,\dots,a_{m-1} \in F$. Since $c^{-1} \varphi$ has the same class in $H_2^n(F)$ as $\varphi$, assume without loss of generality that $c=1$.
By Lemma \ref{Split}, the restriction of $\varphi$ to $K=F[\sqrt{b_1},\dots,\sqrt{b_\ell}]$ is hyperbolic when $\ell=m+1-2^{n-1}$.
Therefore the restriction of $C$ to $H_2^n(K)$ is trivial, and by Proposition \ref{LagKing}, $C$ can be written as $\sum_{i=1}^\ell \omega_i \wedge \frac{d b_i}{b_i}$ where $\omega_1,\dots,\omega_\ell \in H_2^{n-1}(F)$. The symbol length of each $\omega_i$ is at most $sl_2^{n-1}(F)$. Therefore, the symbol length of $C$ is at most $\ell \cdot sl_2^{n-1}(F)$. As a result $sl_2^n(F) \leq (\frac{u^n(F)}{2}+1-2^{n-1}) sl_2^{n-1}(F)$. By iterating this fact $n-1$ times, we obtain $sl_2^n(F) \leq \left(\prod_{i=2}^n (\frac{u^i(F)}{2}+1-2^{i-1})\right) sl_2^1(F)$, but $sl_2^1(F)=1$, so $sl_2^n(F) \leq \prod_{i=2}^n (\frac{u^i(F)}{2}+1-2^{i-1})$.
\end{proof}

Note that since $u^n(F) \leq u(F)$ for any positive integer $n$, Theorem \ref{GoodBound} implies $sl_2^n(F) \leq \prod_{i=2}^n (\frac{u(F)}{2}+1-2^{i-1})$ for any $n \geq 2$ assuming $2^n \leq u(F) < \infty$. There exist fields with finite $u$-invariant and infinite $2$-rank (see \cite{MammoneTignolWadsworth:1991}), therefore Theorem \ref{GoodBound} gives a bound on the symbol length in cases not covered by Remark \ref{l1}. 
For $sl_2^3(F)$ we obtain a different upper bound which involves only $u^3(F)$ and not $u^2(F)$ by considering the Severi-Brauer variety and the Clifford invariant in a similar way to \cite{Saltman-higher-degrees}.

\begin{rem}[{\cite[Proposition 13.1 \& Remark 13.2]{BabicChernousov:2015}}]\label{BC}
Given a field $F$ of $\operatorname{char}(F)=2$ containing the algebraic closure $F_0$ of $\mathbb{F}_2$, every quadratic form of dimension $2m$ and trivial Arf invariant descends to a $(2m)$-dimension form of trivial Arf invariant over a subfield $K$ of $F$ of transcendence degree at most $m+1$ over $F_0$.
\end{rem}

\begin{thm}\label{main}
Let $F$ be a field with $\operatorname{char}(F)=2$ with finite $u^3(F)$.
Then $sl_2^3(F) \leq \binom{2m+2^{m-1}}{2}$ where $u^3(F)=2m$.
If we assume further that $F$ contains the algebraic closure $F_0$ of $\mathbb{F}_2$ then $sl_2^3(F) \leq \binom{m+1+2^{m-1}}{2}$.
\end{thm}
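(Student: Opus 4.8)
The plan is to descend a given class $C \in H_2^3(F)$ to a subfield of $F$ whose transcendence degree over the base field is controlled (the prime field $\mathbb{F}_2$ in general, the algebraic closure $F_0$ of $\mathbb{F}_2$ in the refined statement), and then to apply the $2$-rank estimate of \Rref{prank} over that subfield; since restriction along a field extension cannot increase the symbol length of a class, this will bound $sl_2^3(F)$.

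We may assume $H_2^3(F) \neq 0$, so by the Hauptsatz every nonzero anisotropic form in $I_q^3 F$ has dimension at least $8$, and hence $u^3(F) = 2m$ with $m \geq 4$. Represent $C$ by an anisotropic form $\varphi \in I_q^3 F$ of dimension at most $2m$; since the asserted bounds increase with this dimension, we may take $\dim\varphi = 2m$ and fix a presentation $\varphi = b_1[1,a_1]\perp\dots\perp b_m[1,a_m]$ with $a_1+\dots+a_m = x^2+x$ for some $x \in F$ (this is the vanishing of $\operatorname{Arf}(\varphi)$). In the general case, let $K = \mathbb{F}_2(a_1,\dots,a_m,b_1,\dots,b_m,x) \subseteq F$: because $x$ is algebraic over $\mathbb{F}_2(a_1,\dots,a_m)$, the field $K$ is finitely generated of transcendence degree at most $2m$ over $\mathbb{F}_2$, the form $\varphi_0$ over $K$ given by the same presentation has $\operatorname{Arf}(\varphi_0) = 0$, i.e.\ $\varphi_0 \in I_q^2 K$, and $\varphi_0 \otimes_K F = \varphi$. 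If in addition $F_0 \subseteq F$, then by \Rref{BC} we may instead take $K$ to be a finitely generated subfield of $F$ of transcendence degree at most $m+1$ over $F_0$ (shrink to the subfield generated over $F_0$ by the coefficients of the descended form), together with $\varphi_0 \in I_q^2 K$ of dimension $2m$, presented as above, satisfying $\varphi_0 \otimes_K F = \varphi$.

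Next I would trivialize the Clifford invariant of $\varphi_0$ over a small extension of $K$. Since $a_1+\dots+a_m \in \wp(K)$ we have $a_m \equiv a_1+\dots+a_{m-1} \pmod{\wp(K)}$, so by additivity of the symbol $[\,\cdot\,,b_m)_{2,K}$ in its first argument and the relation $[\alpha,\beta)_{2,K}\otimes[\alpha,\beta')_{2,K}\sim[\alpha,\beta\beta')_{2,K}$, the Clifford invariant $e^2(\varphi_0) = C\ell(\varphi_0) = \bigotimes_{i=1}^m[a_i,b_i)_{2,K}$ is Brauer-equivalent to $A := \bigotimes_{i=1}^{m-1}[a_i,\,b_ib_m)_{2,K}$, an algebra of degree $2^{m-1}$. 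As $\varphi \in I_q^3 F = \ker(e^2)$, base change gives $[A\otimes_K F] = e^2(\varphi) = 0$, so $A$ is split over $F$; hence the Severi-Brauer variety $\operatorname{SB}(A)$, which has dimension $2^{m-1}-1$ over $K$, has an $F$-point, and the residue field $L = \kappa(z) \subseteq F$ of the corresponding point $z$ satisfies $K \subseteq L \subseteq F$, is finitely generated over $K$ of transcendence degree at most $2^{m-1}-1$, and splits $A$. Consequently $\varphi_0 \otimes_K L \in I_q^2 L$ has trivial Clifford invariant, i.e.\ $\varphi_0 \otimes_K L \in I_q^3 L$, and $e^3(\varphi_0 \otimes_K L) \in H_2^3(L)$ restricts to $e^3(\varphi) = C$ in $H_2^3(F)$.

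Finally, $L$ is finitely generated over the perfect field $\mathbb{F}_2$ (resp.\ $F_0$) of transcendence degree at most $2m+2^{m-1}$ (resp.\ at most $m+1+2^{m-1}$), hence of $2$-rank at most that number, so \Rref{prank} applied with $n=2$ gives $sl_2^3(L) \leq \binom{2m+2^{m-1}}{2}$ (resp.\ $\binom{m+1+2^{m-1}}{2}$). Writing $e^3(\varphi_0 \otimes_K L)$ as a sum of at most $sl_2^3(L)$ symbols over $L$ and restricting these to $F$ exhibits $C$ as a sum of at most $sl_2^3(L)$ symbols over $F$; since $C$ was arbitrary, $sl_2^3(F) \leq \binom{2m+2^{m-1}}{2}$, and $sl_2^3(F) \leq \binom{m+1+2^{m-1}}{2}$ when $F_0 \subseteq F$. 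The essential idea is to separate the descent of the \emph{form} (via \Rref{BC}, or via the coefficient field in general) from the splitting of its \emph{Clifford invariant} (via $\operatorname{SB}(A)$), and the key quantitative input is that the Arf relation forces $e^2(\varphi_0)$ to have index at most $2^{m-1}$, which is precisely the source of the summand $2^{m-1}$. I expect the transcendence-degree bookkeeping to be the only delicate point, and it has slack to spare (for instance $\dim\operatorname{SB}(A) = 2^{m-1}-1$ may be rounded up to $2^{m-1}$), while the remaining ingredients (the $F$-rational point of $\operatorname{SB}(A)$, the passage $I_q^2 L \to I_q^3 L$ once $e^2$ vanishes, and the appeal to \Rref{prank}) are routine.
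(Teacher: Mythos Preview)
Your proof is correct and follows essentially the same approach as the paper: descend the form to a subfield $K$ of bounded transcendence degree (via its coefficients in general, via \Rref{BC} when $F_0 \subseteq F$), pass to an intermediate field $L \subseteq F$ using the Severi-Brauer variety of the Clifford algebra so that the form lands in $I_q^3 L$, and then invoke \Rref{prank}. You make the index bound $2^{m-1}$ and the construction of $L$ (as the residue field of an $F$-point of $\operatorname{SB}(A)$) more explicit than the paper does, but the strategy is identical.
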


\begin{proof}
Let $\varphi$ be a quadratic form of dimension $2 m$ in $I_q^3 F$. Similarly to what was done in \cite{Saltman-higher-degrees}, there exists a subfield $K$ of $F$ of transcendence degree $\leq 2m$ over $\mathbb{F}_2$ such that $\varphi=\psi_F$ for some $(2m)$-dimensional form $\psi$ in $I_q^2 K$. (Note that the Arf invariant is trivialized by a suitable quadratic extension which does not affect the transcendence degree.)
If $F$ contains the algebraic closure $F_0$ of $\mathbb{F}_2$ then by Remark \ref{BC} we can assume the transcendence degree of $K$ over $F_0$ is at most $m+1$.

Since the Clifford algebra of $\psi$ is a central simple algebra over $K$ of index at most $2^{m-1}$, the Severi-Brauer variety of the underlying division algebra is of dimension at most $2^{m-1}-1$ as a projective variety by Chatelet's theorem (see \cite[5.3.6]{GilleSzamuely:2006}).
Since the Severi-Brauer variety is generic, there exists a specialization $L$ of its function field which is a subfield of $F$ and contains $K$. The transcendence degree of $L$ over $K$ is at most $2^{m-1}$ and $\psi_L$ is in $I_q^3 L$.
Then $L$ is of transcendence degree at most $\operatorname{tr.deg}(K)+2^{m-1}$ over $F_0$, which means its 2-rank is at most $\operatorname{tr.deg}(K)+2^{m-1}$.
Therefore the symbol length of the class of $\psi_L$ in $H_2^3(L)$ is $\leq \binom{\operatorname{tr.deg}(K)+2^{m-1}}{2}$ by Remark \ref{prank}, and so the symbol length of the class of $\varphi$ in $H_2^3(F)$ is $\leq \binom{\operatorname{tr.deg}(K)+2^{m-1}}{2}$.
\end{proof}

\section{Linked Fields}

In this section we interpret the property $sl^n_2(F)=1$ in terms of linkage. We denote the set of quadratic $n$-fold Pfister forms by $P_n(F)$. Two quadratic Pfister forms $\varphi$ and $\psi$ are ``separably $k$-linked" if there exists $\phi \in P_k(F)$ such that $\varphi=\varphi' \otimes \phi$ and $\psi=\psi' \otimes \phi$ for some bilinear Pfister forms $\varphi'$ and $\psi'$. They are ``inseparably $k$-linked" if there exists a $k$-fold bilinear Pfister form $\phi$ such that $\varphi=\phi \otimes \varphi'$ and $\psi=\phi \otimes \psi'$ for some quadratic Pfister forms $\varphi'$ and $\psi'$.
Inseparable $k$-linkage implies separable $k$-linkage but not vice versa (see \cite[Corollaire 2.1.4]{Faivre:thesis}).
We say that $I_q^n F$ is ``separably (inseparably) linked" if every two quadratic $n$-fold Pfister forms over $F$ are separably (inseparably) $(n-1)$-linked. A field $F$ with separably linked $I_q^2 F$ is known in the literature as a ``linked" field (e.g. \cite{ElmanLam:1973}).

\begin{prop}\label{corres}
For a field $F$, $I_q^n F$ is separably linked if and only if $sl_2^n(F)=1$.
\end{prop}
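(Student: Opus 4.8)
The plan is to move freely between $H_2^n(F)$ and $I_q^nF/I_q^{n+1}F$ via the cohomological invariant $e^n$, using two simple facts. First, a class in $H_2^n(F)$ is a symbol exactly when it equals $e^n(\pi)$ for some $\pi\in P_n(F)$; since $H_2^n(F)$ is generated by symbols, every class is a finite sum of symbols. Second, $e^n(c\pi)=e^n(\pi)$ for every $c\in F^\times$ and $\pi\in P_n(F)$, because $c\pi\perp\pi$ is a scalar multiple of an $(n+1)$-fold Pfister form and hence lies in $\ker e^n=I_q^{n+1}F$. Working in characteristic $2$, where every form equals its negative in $W_qF$, the condition $sl_2^n(F)=1$ becomes: for all $\pi_1,\pi_2\in P_n(F)$ there is $\pi_3\in P_n(F)$ with $\pi_1\perp\pi_2\perp\pi_3\in I_q^{n+1}F$.

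Given this, the implication ``$I_q^nF$ separably linked $\Rightarrow sl_2^n(F)=1$'' reduces to showing that the sum of two symbols is a symbol, and then inducting on the number of symbols. If $\pi_1,\pi_2\in P_n(F)$ are separably $(n-1)$-linked, write $\pi_1=\langle 1,b\rangle\otimes\phi$ and $\pi_2=\langle 1,c\rangle\otimes\phi$ with $\phi\in P_{n-1}(F)$ (so $\langle 1,b\rangle$, $\langle 1,c\rangle$ are bilinear $1$-fold Pfister forms). Since $[1,a]\perp[1,a]$ is hyperbolic for every $a\in F$, the form $\phi\perp\phi\cong\langle 1,1\rangle\otimes\phi$ is hyperbolic, so in $W_qF$ one computes $\pi_1\perp\pi_2=\langle 1,b,1,c\rangle\otimes\phi=\langle b,c\rangle\otimes\phi=b\,(\langle 1,bc\rangle\otimes\phi)$, a scalar multiple of the $n$-fold Pfister form $\pi_3:=\langle 1,bc\rangle\otimes\phi$. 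Hence $e^n(\pi_1)+e^n(\pi_2)=e^n(b\pi_3)=e^n(\pi_3)$ is a symbol.

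The substantial direction is the converse. Assume $sl_2^n(F)=1$ and fix $\pi_1,\pi_2\in P_n(F)$; I may assume both are anisotropic, since if $\pi_2$ is hyperbolic one writes $\pi_1=\langle 1,b\rangle\otimes\psi$ with $\psi\in P_{n-1}(F)$ and notes $\pi_2\cong\langle 1,1\rangle\otimes\psi$ (both being hyperbolic $n$-fold Pfister forms), so $\psi$ is a common factor. Choose $\pi_3\in P_n(F)$ with $\pi_1\perp\pi_2\perp\pi_3\in I_q^{n+1}F$. If $\pi_3$ is hyperbolic, then $\pi_1\perp\pi_2\in I_q^{n+1}F$ has anisotropic dimension at most $2^{n+1}-2$, hence is hyperbolic by the Hauptsatz \cite[Theorem 23.8]{EKM}, so $\pi_1\cong\pi_2$ and we are done. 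So suppose $\pi_3$ is anisotropic and pass to the function field $L=F(\pi_3)$ of the quadric $\pi_3=0$; over $L$ the Pfister form $\pi_3$ becomes isotropic, hence hyperbolic. As $\pi_1\perp\pi_2$ is isotropic over $F$ (evaluate $\pi_1$ and $\pi_2$ at vectors where each takes the value $1$), $(\pi_1\perp\pi_2)_L\in I_q^{n+1}L$ has anisotropic dimension $\leq 2^{n+1}-2<2^{n+1}$ and so is hyperbolic over $L$. Therefore $\pi_1\perp\pi_2$ lies in the Witt kernel of $L/F$, which for the function field of an anisotropic quadratic Pfister quadric is $\{\pi_3\otimes\tau\ :\ \tau\text{ a symmetric bilinear form over }F\}$; write $\pi_1\perp\pi_2=\pi_3\otimes\tau$ in $W_qF$. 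Because $\pi_3$ is anisotropic, hence round, the anisotropic part of $\pi_3\otimes\tau$ is again of the form $\pi_3\otimes\tau_0$, so $\dim(\pi_1\perp\pi_2)_{\mathrm{an}}$ is divisible by $2^n$; being at most $2^{n+1}-2$, it equals $0$ or $2^n$. In particular $\dim(\pi_1\perp\pi_2)_{\mathrm{an}}\leq 2^n$, and by the characterisation of separable $(n-1)$-linkage through this dimension (\cite{Faivre:thesis}, \cite{ChapmanDolphin:2017}) the forms $\pi_1$ and $\pi_2$ are separably $(n-1)$-linked.

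I expect the main obstacle to be exactly this converse, which rests on two inputs from the characteristic-$2$ theory: the determination of the Witt kernel $W_q(L/F)$ when $L$ is the function field of an anisotropic quadratic Pfister quadric, and the fact that divisibility by an anisotropic quadratic Pfister form passes to the anisotropic part of a quadratic form. Both are available in the literature on quadratic forms in characteristic $2$, but the Witt-kernel statement is the more delicate one if a self-contained treatment is wanted. An alternative route replaces $L=F(\pi_3)$ by the purely inseparable quadratic extension $F[\sqrt b]$ killing a bilinear slot $b$ of $\pi_3$ and uses \Pref{LagKing} in place of the Pfister-quadric kernel; this only yields divisibility of $\pi_1\perp\pi_2$ by $\langle 1,b\rangle$, so the concluding dimension count becomes more delicate, but it keeps the argument within the tools already set up in the paper.
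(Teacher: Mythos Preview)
Your forward direction agrees with the paper's. For the converse, however, you take a genuinely different route. The paper simply invokes \cite[Proposition 24.5]{EKM}: if $\pi_1,\pi_2,\pi_3\in P_n(F)$ satisfy $\pi_1\perp\pi_2\perp\pi_3\in I_q^{n+1}F$, then there exist $\rho\in P_{n-1}(F)$ and $\alpha,\beta\in F^\times$ with $\pi_1=\langle\langle\alpha\rangle\rangle\otimes\rho$, $\pi_2=\langle\langle\beta\rangle\rangle\otimes\rho$, $\pi_3=\langle\langle\alpha\beta\rangle\rangle\otimes\rho$; separable $(n-1)$-linkage of $\pi_1$ and $\pi_2$ is then immediate. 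You instead pass to the function field $L=F(\pi_3)$, use the description of the Witt kernel $W_q(L/F)$ for an anisotropic quadratic Pfister quadric to write $\pi_1\perp\pi_2$ as $\pi_3\otimes\tau$, deduce from roundness that $\dim(\pi_1\perp\pi_2)_{\mathrm{an}}$ is a multiple of $2^n$ and hence at most $2^n$, and finally read off separable $(n-1)$-linkage via the Witt-index characterization (the paper's \Pref{Wittindex}). Your argument is correct and in effect re-derives the portion of \cite[Proposition 24.5]{EKM} needed here; what it buys is a more transparent mechanism, at the cost of trading one black box for two others (the Pfister-quadric Witt kernel and \Pref{Wittindex}) of comparable depth. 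The paper's one-line citation is crisper, but your approach has the advantage of showing exactly where the Hauptsatz and the geometry of $\pi_3$ enter.
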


\begin{proof}
Suppose $I_q^n F$ is separably linked.
This means that for every two quadratic $n$-fold Pfister forms, $\varphi$ and $\psi$, there exists $\rho \in P_{n-1}(F)$ and $\alpha,\beta \in F^\times$ such that $\varphi=\langle \langle \alpha \rangle \rangle \otimes \rho$ and $\psi=\langle \langle \beta \rangle \rangle \otimes \rho$.
Then $\varphi \perp \psi \cong \langle \langle \alpha \beta \rangle \rangle \otimes \rho \pmod{I_q^{n+1} F}$. This way every orthogonal sum of several $n$-fold Pfister forms is congruent modulo $I_q^{n+1} F$ to one $n$-fold Pfister form, which means $sl_2^n(F)=1$.

In the opposite direction, assume $sl_2^n(F)=1$.
Consider two $n$-fold Pfister forms $\varphi$ and $\psi$.
Since $sl_2^n(F)=1$, there exists some $\pi \in P_n(F)$ such that $\varphi \perp \psi \cong \pi \pmod{I_q^{n+1} F}$.
By \cite[Proposition 24.5]{EKM}, there exists $\rho \in P_{n-1}(F)$ and $\alpha,\beta \in F^\times$ such that $\varphi=\langle \langle \alpha \rangle \rangle \otimes \rho$, $\psi=\langle \langle \beta \rangle \rangle \otimes \rho$ and $\pi=\langle \langle \alpha \beta \rangle \rangle \otimes \rho$.
Therefore $\varphi$ and $\psi$ are separably $(n-1)$-linked.
Consequently $I_q^n F$ is separably linked.
\end{proof}

Separable $n$-linkage of $F$ is therefore a hands-on way of understanding the property $sl_2^n(F)=1$. We obtain the fact that $(I_q^n F \neq 0) \& (sl_2^n(F)=1) \Rightarrow u^n(F) \in \{2^n,2^{n+1}\}$ for any $n \geq 2$ (Theorem \ref{Theoremu}) using this language. Recall that the Witt index $i_W(\varphi)$ of a quadratic form $\varphi : V \rightarrow F$ is the maximal dimension of a subspace $W \subseteq V$ for which $\varphi(w)=0$ for all $w \in W$. We cite here a few necessary results from the literature and prove Proposition \ref{lift} which is an essential ingredient in the proof of Theorem \ref{Theoremu}.

\begin{lem}[{\cite[Lemme 3.3.4]{Faivre:thesis}}]\label{Faivre}
Suppose $F$ is a linked field with $\operatorname{char}(F)=2$ and that $I_q^4(F)=0$. Then every two 3-fold Pfister forms over $F$ are inseparably 2-linked.
\end{lem}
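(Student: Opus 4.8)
The plan is to pass to Kato--Milne cohomology and align the two classes in two stages. Since $\ker(e^3)=I_q^4 F=0$, the invariant $e^3\colon I_q^3 F\to H_2^3(F)$ is an isomorphism, so two $3$-fold quadratic Pfister forms $\varphi,\psi$ are inseparably $2$-linked exactly when $e^3(\varphi)$ and $e^3(\psi)$ admit presentations $\lambda\,\frac{db_1}{b_1}\wedge\frac{db_2}{b_2}$ and $\mu\,\frac{db_1}{b_1}\wedge\frac{db_2}{b_2}$ with the \emph{same} logarithmic part, equivalently $\varphi\cong\langle\langle b_1,b_2\rangle\rangle_b\otimes[1,\lambda]$ and $\psi\cong\langle\langle b_1,b_2\rangle\rangle_b\otimes[1,\mu]$ with a common bilinear $2$-fold Pfister factor. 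The hyperbolic case is immediate: if $\varphi$ is hyperbolic, write $\psi=\langle\langle c_1,c_2\rangle\rangle_b\otimes[1,b]$; then $\varphi\cong\langle\langle c_1,c_2\rangle\rangle_b\otimes[1,0]$ since both are hyperbolic of dimension $8$, and $\langle\langle c_1,c_2\rangle\rangle_b$ is a common factor. So assume $\varphi,\psi$ anisotropic.

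The first stage is to arrange a common Artin--Schreier slot. Omitting one bilinear slot, write $\varphi=\langle\langle b_1\rangle\rangle_b\otimes\sigma$ and $\psi=\langle\langle c_1\rangle\rangle_b\otimes\sigma'$ with $\sigma,\sigma'\in P_2(F)$. Linkedness gives $sl_2^2(F)=1$ (\Pref{corres}), so $\sigma$ and $\sigma'$ are separably $1$-linked: $\sigma\cong\langle\langle d,t]]$ and $\sigma'\cong\langle\langle d',t]]$ for a common $t$. Hence $\varphi\cong\langle\langle b_1,d,t]]$ and $\psi\cong\langle\langle c_1,d',t]]$, so that $e^3(\varphi)=t\,\frac{db_1}{b_1}\wedge\frac{dd}{d}$ and $e^3(\psi)=t\,\frac{dc_1}{c_1}\wedge\frac{dd'}{d'}$.

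The second stage, which I expect to be the main obstacle, is to merge the two bilinear parts; this is where $I_q^4 F=0$ is indispensable. The key consequence of this hypothesis is that every anisotropic $3$-fold quadratic Pfister form over $F$ is universal: for $x\in F^\times$ the form $\langle\langle x\rangle\rangle_b\otimes\varphi=\langle\langle x,b_1,d,t]]$ is a $4$-fold Pfister form, hence hyperbolic, so $\varphi$ represents $x$; this yields a large supply of isometries $\varphi\cong x\varphi$ and the flexibility needed to change presentations. Using this together with the Witt-kernel description (\Pref{LagKing}) and $sl_2^2(F)=1$, I would rewrite the symbol $e^3(\psi)$ so that first $b_1$ and then also $d$ is brought into a logarithmic slot, reaching $\psi\cong\langle\langle b_1,d,v]]$ for some $v$; then $\langle\langle b_1,d\rangle\rangle_b$ is a common bilinear $2$-fold Pfister factor of $\varphi$ and $\psi$, i.e.\ they are inseparably $2$-linked. (Alternatively, since linked fields satisfy $u(F)\le 8$, the form $\varphi\perp\psi\in I_q^3 F$ has anisotropic dimension $0$ or $8$ by the Hauptsatz \cite[Theorem 23.8]{EKM}, so $i_W(\varphi\perp\psi)\ge 4$ and $\varphi,\psi$ share a $4$-dimensional subform, which one then tries to recognise as the totally singular quasi-Pfister half of a common bilinear factor.) The genuinely hard point is getting \emph{both} prescribed elements into logarithmic position at once --- the exact difference between inseparable and separable linkage --- which forces a careful chain-equivalence analysis of quadratic Pfister forms; controlling this bookkeeping, powered by the universality coming from $I_q^4 F=0$, is the technical heart of the proof.
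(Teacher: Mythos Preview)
The paper does not supply its own proof of this lemma: it is quoted from Faivre's thesis \cite[Lemme~3.3.4]{Faivre:thesis} and used as a black box (to handle the case $n=2$ in the proof of \Pref{lift}). So there is no in-paper argument to compare your proposal against.

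On the proposal itself: your first stage is fine --- using that $F$ linked means $I_q^2 F$ is separably linked, you arrange a common last slot $t$ and reach $\varphi\cong\MPf{b_1,d,t}$, $\psi\cong\MPf{c_1,d',t}$. But this is only separable $1$-linkage of $\varphi$ and $\psi$, and the whole difficulty lies in your second stage, which you do not carry out. Saying you ``would rewrite $e^3(\psi)$ so that first $b_1$ and then also $d$ is brought into a logarithmic slot'' is a restatement of the goal, not an argument: neither universality of anisotropic $3$-fold Pfisters (from $I_q^4 F=0$) nor \Pref{LagKing} obviously forces a presentation with two \emph{prescribed} bilinear slots simultaneously, and you yourself flag this as the ``genuinely hard point''. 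Your alternative route via $u(F)\le 8$ and $i_W(\varphi\perp\psi)\ge 2^2$ does give a common $4$-dimensional subform by \Pref{Wittindex}, but that is precisely separable $2$-linkage; promoting it to a common \emph{bilinear} $2$-fold Pfister factor is again the separable-versus-inseparable gap you are trying to close. (You could then invoke \Pref{septoinsep} with $n=3$, but that proposition is an independent external input, not something your sketch derives.) Note also that in this paper's logical order the bound $u(F)\le 8$ for linked fields is \emph{recovered from} Lemma~\ref{Faivre} via \Pref{lift} and Theorems~\ref{Theoremu}--\ref{Theoremd}, so citing it here is circular unless you appeal directly to \cite{ChapmanDolphin:2017}. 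In short: the plan points in a plausible direction, but the decisive step --- forcing both bilinear slots to coincide --- is missing.
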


\begin{thm}[{\cite[Theorem 5.6]{ChapmanDolphin:2017}}]\label{Linkage2}
If $F$ is a linked field with $\operatorname{char}(F)=2$ then $I_q^4 F=0$.
\end{thm}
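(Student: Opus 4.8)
The plan is to reduce the statement to an assertion about $2$-fold Pfister forms. Since $I_q^4 F$ is generated as a group by the scalar multiples of quadratic $4$-fold Pfister forms and scaling preserves hyperbolicity, it suffices to show that every quadratic $4$-fold Pfister form is hyperbolic. Writing $\MPf{b_1,b_2,b_3,a}=\Pf{b_1}\otimes\MPf{b_2,b_3,a}$, one sees that this form is hyperbolic (equivalently isotropic) exactly when $b_1$ is represented by $\MPf{b_2,b_3,a}$; and since $\MPf{b_2,b_3,a}=\Pf{b_2}\otimes\MPf{b_3,a}$ has $\MPf{b_3,a}$ as a subform, it is enough to prove that \emph{every anisotropic quadratic $2$-fold Pfister form over $F$ is universal}. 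This last assertion is the characteristic $2$ analogue of the theorem of Elman and Lam \cite{ElmanLam:1973} (for $\operatorname{char}(F)\neq 2$) and Baeza \cite{Baeza:1982} (for $\operatorname{char}(F)=2$) that a not formally real linked field has $u$-invariant at most $8$; every field of characteristic $2$ is trivially not formally real, so the conclusion is unconditional here.

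So fix an anisotropic $\MPf{b,a}$ and $c\in F^\times$, and suppose towards a contradiction that $c$ is not represented by $\MPf{b,a}$. Then $\MPf{c,b,a}=\MPf{b,a}\perp c\MPf{b,a}$ is an anisotropic quadratic $3$-fold Pfister form. The idea is to use the linkage hypothesis to rewrite $\MPf{c,b,a}$ with coincident slots and then to collapse it: applying linkage to suitable quadratic $2$-fold Pfister subforms produces presentations with a common quadratic slot, and the crux is to upgrade this to a presentation exhibiting a common quadratic $2$-fold Pfister subform of two of the constituent forms, at which point a bilinear cancellation — using that a metabolic bilinear form tensored with a nonsingular quadratic form is hyperbolic, together with the Arason--Pfister Hauptsatz \cite[Theorem 23.8]{EKM} and its bilinear counterpart \cite{EKM} to control anisotropic dimensions — shows that $\MPf{c,b,a}$ is in fact hyperbolic, a contradiction. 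Running the same collapsing mechanism on orthogonal sums of scalar multiples of $3$-fold Pfister forms shows, in passing, that every class in $I_q^3 F$ is represented by a form of dimension at most $8$, which already yields $I_q^4 F=0$ via the Hauptsatz.

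I expect the upgrade from a single coincident slot to a common $2$-fold Pfister subform to be the main obstacle. The hypothesis that $F$ is linked provides common slots of $2$-fold Pfister forms only one at a time, and a second application typically replaces the slot just fixed rather than adding a new coincidence, so the reduction has to be organised through a more delicate chain-equivalence argument; moreover it must be organised without circularity, since the cleanest way to obtain the required coincidences — Faivre's result (\Lref{Faivre}) that in a linked field any two $3$-fold Pfister forms are inseparably $2$-linked — itself presupposes $I_q^4 F=0$. The honest alternative would be to transcribe the original Elman--Lam analysis of linked fields \cite{ElmanLam:1973} into characteristic $2$, replacing their formally real dichotomy by the trivial observation above and using purely inseparable quadratic splitting fields in place of their Kummer extensions; there too the technical heart is the control of how anisotropic dimensions grow when Pfister forms are summed over a linked field.
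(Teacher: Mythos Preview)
The paper does not prove this theorem; it is quoted from \cite{ChapmanDolphin:2017} and used as a black box, so there is no in-paper argument to compare against. That said, your proposal contains a genuine gap.

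Your reduction overshoots. The assertion ``every anisotropic quadratic $2$-fold Pfister form over $F$ is universal'' is \emph{equivalent} to $I_q^3 F=0$: for an anisotropic $\MPf{b,a}$, the $3$-fold form $\MPf{c,b,a}=\MPf{b,a}\perp c\,\MPf{b,a}$ is isotropic (hence hyperbolic) exactly when $c$ is represented by $\MPf{b,a}$, by roundness of Pfister forms. But linked fields with $I_q^3 F\neq 0$ exist---these are precisely the linked fields with $u(F)=8$ rather than $4$ (see the discussion after \Tref{Theoremd}). Hence the statement you set out to prove by contradiction in your second paragraph is in general false, and no ``collapsing'' argument can extract a contradiction from the mere existence of an anisotropic $\MPf{c,b,a}$. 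You are, in effect, trying to prove $I_q^3 F=0$ when only $I_q^4 F=0$ is claimed.

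Your ``in passing'' remark is the salvageable direction: what is actually needed is $u^3(F)\leq 8$, i.e.\ that every class in $I_q^3 F$ is Witt equivalent to a single $3$-fold Pfister form, after which the Hauptsatz kills $I_q^4 F$. That is essentially the content of the cited theorem in \cite{ChapmanDolphin:2017}, and you are right that its proof is in the spirit of the Elman--Lam analysis; but your proposal does not carry it out, and your own final paragraph correctly flags the missing step as nontrivial and warns against the circular appeal to \Lref{Faivre}.
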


\begin{prop}\label{VanishingI^n}
For $n \geq 2$, if $I_q^n F$ is separably linked then $I_q^{n+2} F=0$. If $I_q^n F$ is inseparably linked then $I_q^{n+1} F=0$.
\end{prop}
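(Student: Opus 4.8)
The plan is to prove the two implications separately, in each case reducing to a known "linked field" result by an appropriate divisibility/cancellation argument in $W_qF$. For the separable case, suppose $I_q^n F$ is separably linked. By \Pref{corres} this is equivalent to $sl_2^n(F)=1$, so every element of $I_q^n F$ is, modulo $I_q^{n+1}F$, a scalar multiple of a single $n$-fold Pfister form. The strategy is to show this forces the situation to look, at the top degrees, exactly like a linked field in degree $2$, and then invoke \Tref{Linkage2} ($I_q^4 F = 0$ for linked $F$) together with the shifting mechanism supplied by separable linkage. Concretely: from $\varphi = \MPf{\alpha}\otimes\rho$, $\psi = \MPf{\beta}\otimes\rho$ for all pairs $\varphi,\psi\in P_n(F)$, the bilinear Pfister factor $\rho\in P_{n-1}(F)$ can be taken common, and one "divides out" $\rho$: the point is that any element of $I_q^{n+2}F$ is built from $(n+2)$-fold Pfister forms, each of which is $2$-fold-bilinear times an $n$-fold quadratic Pfister form, and the linkage hypothesis should let us arrange a common $(n-1)$-fold bilinear factor across the pieces so that what remains is governed by the $I_q^4$ of a residue situation.

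More carefully, I expect the cleanest route is an induction on $n$ with the base case $n=2$ being precisely \Tref{Linkage2}. For the inductive step, assume $I_q^n F$ separably linked with $n\ge 3$. One wants to find a field (or a chain of generic splitting constructions) over which the degree drops by one while separable linkage is preserved; alternatively, argue directly: given $\tau\in I_q^{n+2}F$, write it as a sum of general $(n+2)$-fold Pfister forms $\MPf{c_1,\dots,c_{n+1},a}$, pull out a bilinear $\MPf{c_1,\dots,c_j}$ common to enough of them using separable linkage of the associated $n$-fold forms $\MPf{c_{j+1},\dots,c_{n+1},a}$, and reduce to showing a residual class in $I_q^4$ of a linked field vanishes. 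The inseparable case is analogous but one degree sharper: inseparable $k$-linkage means a common $k$-fold \emph{bilinear} Pfister factor, so from inseparably linked $I_q^n F$ one extracts, by \Lref{Faivre}-type reasoning pushed up from degree $3$, that any $(n+1)$-fold Pfister form is controlled by an $I_q^3$ vanishing after removing an $(n-2)$-fold bilinear factor — and $I_q^3$ of a field all of whose $3$-fold Pfister forms are inseparably $2$-linked is $0$. Again the base case is $n=2$: inseparably linked $I_q^2 F$ implies (via \Lref{Faivre} and \Tref{Linkage2}, since inseparable linkage implies separable linkage and hence $I_q^4 F = 0$, which then feeds \Lref{Faivre}) that $I_q^3 F = 0$.

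The key technical ingredient I would isolate as a lemma: if $I_q^n F$ is separably (resp. inseparably) linked and $\phi$ is a fixed $(n-2)$-fold (resp. $(n-1)$-fold) bilinear Pfister form, then the "$\phi$-divisible part" behaves like $I_q^2$ (resp. $I_q^3$) of a linked situation — or more precisely, that one can always choose representatives so that sums of Pfister forms collapse as in \Pref{corres}, and the residue of the collapse lands in a degree-$2$-linked or degree-$3$-inseparably-$2$-linked picture. The hard part will be making the "divide out the common bilinear Pfister factor" step rigorous: bilinear Pfister forms are not cancellable in $W_qF$ in the naive sense, so instead of literal division I expect to argue via function fields — pass to the function field $F(\phi')$ of the affine quadric of a suitable subform to kill $\phi$, track that separable/inseparable linkage descends appropriately (using that these generic extensions are transparent enough), apply the base case there, and then lift back using \Pref{lift} (the cited essential ingredient in the proof of \Tref{Theoremu}) or an excellent-extension / Pfister-form injectivity argument to conclude $I_q^{n+2}F=0$ (resp. $I_q^{n+1}F=0$) over $F$ itself. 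I would first handle the inseparable case, since \Lref{Faivre} gives the degree-$3$ anchor almost directly, and then treat the separable case by the same template with \Tref{Linkage2} as the degree-$4$ anchor, possibly deducing the separable statement from the inseparable one for $n+1$ after observing that separable $n$-linkage forces inseparable $(n+1)$-linkage of all $(n+1)$-fold Pfister forms living in the (then-small) group $I_q^{n+1}F$.
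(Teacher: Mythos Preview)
The paper does not prove this proposition at all: its ``proof'' consists of a citation to \cite[Corollaire 3.3.3]{Faivre:thesis} for every case except $n=2$ in the separable statement, and that remaining case is exactly \Tref{Linkage2}. So there is nothing to compare your argument against except the observation that the result is already in the literature.

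That said, your proposal as written has genuine problems if taken as an independent proof. First, you invoke \Pref{lift} to ``lift back'' after a function-field argument, and again in your closing sentence you suggest deducing the separable case from inseparable $(n+1)$-linkage. But in this paper \Pref{lift} is stated and proved \emph{after} \Pref{VanishingI^n}, and its proof explicitly uses \Pref{VanishingI^n} (to get $I_q^{n+2}F=0$ and then apply \Pref{septoinsep}). Any appeal to \Pref{lift} here is circular. Second, your base case for the inseparable statement is wrong as stated: \Lref{Faivre} says that under linkage and $I_q^4F=0$ every two $3$-fold Pfister forms are inseparably $2$-linked; it does \emph{not} say $I_q^3F=0$, and you have not supplied the missing step from inseparable $2$-linkage of all $3$-fold Pfister forms to the vanishing of $I_q^3F$. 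Third, the heart of your plan, the ``divide out a common bilinear Pfister factor'' manoeuvre via function fields, is left entirely at the level of aspiration: you would need to show that separable (resp.\ inseparable) linkage of $I_q^nF$ persists over the relevant function field and that vanishing of the appropriate $I_q$-group over that extension implies vanishing over $F$, neither of which is automatic.

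If you want to give a self-contained argument rather than cite Faivre, the cleanest route for the inseparable statement is direct: take $\pi\in P_{n+1}(F)$, write $\pi=\langle\langle b\rangle\rangle\otimes\pi'$ with $\pi'\in P_n(F)$, use inseparable linkage of $\pi'$ with $b\cdot\pi'$ (or an equivalent pair) to exhibit $\pi$ as isotropic; for the separable statement one typically shows that separable $(n-1)$-linkage of all $n$-fold Pfister forms forces any $(n+2)$-fold Pfister form to be isotropic by a Witt-index count as in \Pref{Wittindex}. Either way, you must avoid \Pref{lift}.
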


\begin{proof}
The statement appears in \cite[Corollaire 3.3.3]{Faivre:thesis} for all the cases but the case of $I_q^2 F$ separably linked. The latter is Theorem \ref{Linkage2}.
\end{proof}

\begin{prop}[{\cite[Corollary 5.4]{ChapmanGilatVishne:2017}}]\label{septoinsep}
Suppose $n \geq 2$, $\operatorname{char}(F)=2$ and $I_q^{n+1} F=0$. Then two forms in $P_n(F)$ are separably $(n-1)$-linked if and only if they are inseparably $(n-1)$-linked.
\end{prop}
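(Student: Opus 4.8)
One direction needs no hypothesis: inseparable $(n-1)$-linkage always implies separable $(n-1)$-linkage, which is the general containment recalled at the start of this section (see \cite[Corollaire 2.1.4]{Faivre:thesis}). So the task is the converse --- under $I_q^{n+1}F=0$, two separably $(n-1)$-linked forms in $P_n(F)$ are inseparably $(n-1)$-linked.

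Write the separable linkage as $\varphi\cong\langle\langle x\rangle\rangle\otimes\phi$ and $\psi\cong\langle\langle y\rangle\rangle\otimes\phi$, where $\phi$ is a quadratic $(n-1)$-fold Pfister form, $x,y\in F^\times$, and $\langle\langle x\rangle\rangle=\langle 1,x\rangle$ is the bilinear $1$-fold Pfister form. I would first clear the isotropic cases: a hyperbolic $n$-fold quadratic Pfister form equals $\tau\otimes\langle\langle 0]]$ for every $(n-1)$-fold bilinear Pfister form $\tau$, so it is inseparably $(n-1)$-linked to all of $P_n(F)$; hence we may take $\varphi,\psi$ anisotropic, and then $x,y\notin F^{\times2}$.

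Next I would bring $I_q^{n+1}F=0$ into play. The form $\langle\langle x\rangle\rangle\otimes\langle\langle y\rangle\rangle\otimes\phi$ is an $(n+1)$-fold quadratic Pfister form, so it lies in $I_q^{n+1}F=0$ and is hyperbolic; since it equals $\langle\langle y\rangle\rangle\otimes\varphi=\varphi\perp y\varphi$ and quadratic Pfister forms are round ($G(\varphi)=D(\varphi)$), this gives $y\varphi\cong\varphi$, i.e. $y\in D(\varphi)$, and symmetrically $x\in D(\psi)$. (Equivalently: $I_q^{n+1}F=\ker e^n=0$ makes $e^n\colon I_q^nF\to H_2^n(F)$ an isomorphism and forces $H_2^{n+1}(F)=I_q^{n+1}F/I_q^{n+2}F=0$, and the relation is $\frac{dy}{y}\wedge e^n(\varphi)=0$ in $H_2^{n+1}(F)$.) In the easy sub-case $xy\in D(\phi)=G(\phi)$ one has $\langle\langle x\rangle\rangle\otimes\phi\cong\langle\langle y\rangle\rangle\otimes\phi$, hence $\varphi\cong\psi$ and they are trivially inseparably $(n-1)$-linked; the substance lies in the complementary case, where $y$ is represented by $\varphi$ only through a genuine mixture of the two halves of $\varphi=\phi\perp x\phi$.

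The remaining task is to upgrade ``$y\in D(\varphi)$ and $x\in D(\psi)$'', together with the common quadratic $(n-1)$-fold Pfister factor $\phi$, into a common \emph{$(n-1)$-fold bilinear} Pfister factor. The plan is a chain-equivalence argument for quadratic $n$-fold Pfister forms in characteristic $2$, in the spirit of the common-slot lemmas behind \cite[Proposition 24.5]{EKM} and \Pref{corres}: re-present $\varphi$ and $\psi$ so as to extract common bilinear slots one at a time, using at each stage that $I_q^{n+1}F=0$ forces the relevant auxiliary $(n+1)$-fold quadratic Pfister form to be hyperbolic --- hence the relevant value to be represented, hence the next bilinear slot to be common --- until the accumulated common bilinear factor reaches fold $n-1$. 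The main obstacle is the bookkeeping of this iteration: the bilinear part of a quadratic Pfister form is far from unique, so one has to track carefully which bilinear slots of $\phi$ persist through successive re-presentations and check that, after removing a common slot, the residual $(n-1)$-fold quadratic Pfister forms remain separably $(n-2)$-linked in a way compatible with the slot already extracted, so that the step really does iterate down to fold $n-1$. This last point --- controlling the interaction between the separable linkage data and the slot arithmetic under the vanishing of $I_q^{n+1}F$ --- is where the weight of the proof lies.
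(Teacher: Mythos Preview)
The paper does not supply its own proof of this proposition: it is quoted verbatim as \cite[Corollary 5.4]{ChapmanGilatVishne:2017} and used as a black box. So there is no argument here to compare your plan against; the proof lives entirely in the cited reference.

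As for the plan itself, your reductions are sound up to the point where you observe that $\langle\langle x,y\rangle\rangle\otimes\phi\in I_q^{n+1}F=0$ forces $y\in D(\varphi)$ and $x\in D(\psi)$. But from there you explicitly defer the real content to an unspecified ``chain-equivalence argument'' and acknowledge that the bookkeeping --- extracting successive common \emph{bilinear} slots while keeping the residual forms compatibly linked --- is where the weight lies. That is precisely the step that distinguishes inseparable from separable linkage in characteristic~$2$, and nothing in your outline explains how $I_q^{n+1}F=0$ lets you trade the common quadratic $(n-1)$-fold factor $\phi$ for a common bilinear one. Knowing $y\in D(\varphi)$ gives you, via roundness, that $y$ is a similarity factor of $\varphi$, but it does not by itself produce a bilinear slot shared with $\psi$; the iteration you sketch would need, at each stage, a statement of the type ``a value represented in the complement of the pure part can be inserted as a bilinear slot,'' and you have not isolated such a statement. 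In short, the proposal correctly identifies the starting point and the difficulty, but it stops exactly where the proof begins; since the paper gives no argument either, you would have to consult \cite{ChapmanGilatVishne:2017} (or \cite[Th\'eor\`eme 2.5.5]{Faivre:thesis}, which gives the closely related Witt-index criterion for inseparable linkage used later in this paper) to see how the gap is actually closed.
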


\begin{prop}[{\cite[Corollaire 2.3.3]{Faivre:thesis}} or {\cite[Theorem 24.2]{EKM}}]\label{Wittindex}
Given $\varphi \in P_m(F)$ and $\psi \in P_n(F)$, $i_W(\varphi \perp \psi)=2^r$ where $r$ is the maximal integer for which $\varphi$ and $\psi$ are separably $r$-linked.
\end{prop}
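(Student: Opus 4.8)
The plan is to prove $i_W(\varphi\perp\psi)=2^r$ by establishing the two inequalities $i_W(\varphi\perp\psi)\ge 2^r$ and $i_W(\varphi\perp\psi)\le 2^r$, where throughout $r$ denotes the largest integer for which $\varphi$ and $\psi$ are separably $r$-linked (with the convention that every two quadratic Pfister forms are separably $0$-linked), and where I take $\varphi$ and $\psi$ to be anisotropic.

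For the lower bound, suppose $\varphi$ and $\psi$ are separably $r$-linked, so that $\varphi\cong\varphi_0\otimes\phi$ and $\psi\cong\psi_0\otimes\phi$ for some $\phi\in P_r(F)$ and bilinear Pfister forms $\varphi_0,\psi_0$. Since bilinear Pfister forms represent $1$, write $\varphi_0\cong\langle 1\rangle\perp\varphi_0'$ and $\psi_0\cong\langle1\rangle\perp\psi_0'$, so that $\varphi\perp\psi\cong(\langle1,1\rangle_b\otimes\phi)\perp(\varphi_0'\otimes\phi)\perp(\psi_0'\otimes\phi)$. In characteristic $2$ the bilinear form $\langle1,1\rangle_b$ is metabolic — the line spanned by $(1,1)$ is its own orthogonal complement — hence $\langle1,1\rangle_b\otimes\phi$ is a hyperbolic quadratic form of dimension $2\dim\phi=2^{r+1}$, and therefore $i_W(\varphi\perp\psi)\ge 2^r$.

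For the upper bound I would first reduce to a statement about common subforms. Because $\varphi$ and $\psi$ are anisotropic, any totally isotropic subspace of $\varphi\perp\psi$ meets each summand trivially, so it projects injectively onto subspaces $W_1\subseteq V_\varphi$ and $W_2\subseteq V_\psi$, and the relation $\varphi(\pi_1 u)=\psi(\pi_2 u)$ (valid since $\operatorname{char}(F)=2$) shows $\varphi|_{W_1}\cong\psi|_{W_2}$; conversely, if $\varphi|_W\cong\psi|_{W'}$ then the diagonal $\{(v,v)\}$ is a totally isotropic subspace of $\varphi\perp\psi$ of that dimension. Hence $i_W(\varphi\perp\psi)$ equals the maximal dimension of a common subform of $\varphi$ and $\psi$. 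Separable $r$-linkage exhibits $\phi\in P_r(F)$ as such a common subform of dimension $2^r$, so what remains is to show this maximal dimension is $\le 2^r$, which I would prove by induction on $k$ in the form: if $\varphi$ and $\psi$ share a common subform of dimension $\ge 2^k$ then they are separably $k$-linked. For $k=1$: after rescaling by a common represented value and invoking roundness of Pfister forms, the common $2$-dimensional subform may be taken to represent $1$; if it is nonsingular it is isometric to some $[1,a]$, and since a quadratic Pfister form containing $[1,a]$ as a subform is divisible by $[1,a]$, both $\varphi$ and $\psi$ have the shape $(\text{bilinear Pfister})\otimes[1,a]$; if it is totally singular, say $\langle1,e\rangle$ with $e\notin F^{\times2}$, then a quadratic Pfister form containing $\langle1,e\rangle$ as a subform is divisible by the bilinear form $\langle1,e\rangle_b$ (it becomes hyperbolic over $F(\sqrt e)$), so $\varphi$ and $\psi$ are inseparably $1$-linked and hence separably $1$-linked. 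The inductive step extracts, by the case $k-1$, a common quadratic $(k-1)$-fold Pfister divisor $\phi$, writes $\varphi\cong\varphi_0\otimes\phi$ and $\psi\cong\psi_0\otimes\phi$, and must then use the large common subform to enlarge $\phi$ by one slot.

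The hard part is exactly this inductive step. After dividing out $\phi$ one is left with bilinear Pfister forms $\varphi_0,\psi_0$, but the Witt index of $\varphi\perp\psi=(\varphi_0\perp\psi_0)\otimes\phi$ is \emph{not} controlled by the bilinear form $\varphi_0\perp\psi_0$ alone: tensoring with $\phi$ can create isotropy that was not present in $\varphi_0\perp\psi_0$ (for instance $\langle1,e\rangle_b\otimes\phi$ can be hyperbolic while $\langle1,e\rangle_b$ is anisotropic), so one cannot simply recurse on a bilinear-form version of the statement. The correct bookkeeping is the common-slot / chain-$p$-equivalence machinery for quadratic and bilinear Pfister forms in characteristic $2$ — passing to the function field of the conic attached to the Arf slot of $\phi$, or working with the annihilator of $\phi$ in the Witt ring of $F$ — which is precisely what is carried out in \cite[Corollaire 2.3.3]{Faivre:thesis} and \cite[Theorem 24.2]{EKM}. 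Granting that, the two inequalities combine to give $i_W(\varphi\perp\psi)=2^r$.
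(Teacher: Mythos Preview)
The paper does not give a proof of this proposition; it is quoted from \cite[Corollaire 2.3.3]{Faivre:thesis} and \cite[Theorem 24.2]{EKM} and used as a black box. So there is nothing to compare your argument against in the paper itself.

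On the content of your sketch: the lower bound is fine, and the identification of $i_W(\varphi\perp\psi)$ with the maximal dimension of a common subform (using anisotropy of $\varphi,\psi$ and $\operatorname{char}(F)=2$) is correct. However, the inductive claim you formulate --- ``common subform of dimension $\ge 2^k$ $\Rightarrow$ separably $k$-linked'' --- is too weak to finish. Together with the lower bound it yields only $2^r\le i_W(\varphi\perp\psi)<2^{r+1}$, which does not force $i_W$ to be a power of $2$. The correct strengthening (and what is actually proved in the references) is: a common subform of dimension $>2^{k-1}$ already forces separable $k$-linkage. Your base case $k=1$ is stated for dimension $\ge 2$, which coincides with $>2^0$, so it is the right base case for the stronger claim; but the inductive step you outline would have to be adjusted accordingly.

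Since you explicitly defer that inductive step to the cited sources, your write-up is really a roadmap rather than a proof; with the threshold corrected to $>2^{k-1}$ the roadmap is accurate, and the references carry out exactly the ``enlarge the common Pfister divisor by one slot'' step you describe.
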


\begin{prop}\label{lift}
For any $n\geq 2$, if $I_q^n F$ is separably linked then $I_q^{n+1} F$ is inseparably linked (and in particular $I_q^{n+1} F$ is also separably linked).
\end{prop}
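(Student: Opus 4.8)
The plan is to reduce the assertion to a statement about separable $n$-linkage of $(n+1)$-fold Pfister forms and then to manufacture that linkage from the common sub-Pfister structure that $sl_2^n(F)=1$ forces.

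First, if $I_q^n F$ contains no nonzero Pfister form the statement is vacuous, so assume otherwise. By \Pref{corres} the hypothesis is equivalent to $sl_2^n(F)=1$, and by \Pref{VanishingI^n} it gives $I_q^{n+2}F=0$. Hence \Pref{septoinsep}, applied with $n+1$ in place of $n$, tells us that two forms in $P_{n+1}(F)$ are separably $n$-linked if and only if they are inseparably $n$-linked; since inseparable linkage implies separable linkage, it suffices to prove that any $\varphi,\psi\in P_{n+1}(F)$ are separably $n$-linked. By \Pref{Wittindex} this means $i_W(\varphi\perp\psi)\geq 2^n$, and since $\dim(\varphi\perp\psi)=2^{n+2}$ while a nonzero anisotropic form in $I_q^{n+1}F$ has dimension $\geq 2^{n+1}$ by the Hauptsatz (see \cite{EKM}), it is the same as requiring that the anisotropic part of $\varphi\perp\psi$ have dimension at most $2^{n+1}$.

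Next I would extract a common divisor. Splitting off one bilinear slot, write $\varphi=\langle\langle b\rangle\rangle\otimes\varphi_1$ and $\psi=\langle\langle c\rangle\rangle\otimes\psi_1$ with $\varphi_1,\psi_1\in P_n(F)$. Since $I_q^n F$ is separably linked there are $\rho\in P_{n-1}(F)$ and scalars with $\varphi_1=\langle\langle\alpha\rangle\rangle\otimes\rho$ and $\psi_1=\langle\langle\beta\rangle\rangle\otimes\rho$, so $\varphi=\langle\langle b,\alpha\rangle\rangle\otimes\rho$ and $\psi=\langle\langle c,\beta\rangle\rangle\otimes\rho$ are both divisible by $\rho$; in particular they are already separably $(n-1)$-linked, which gives $i_W(\varphi\perp\psi)\geq 2^{n-1}$, i.e.\ the anisotropic part of $\varphi\perp\psi=\bigl(\langle\langle b,\alpha\rangle\rangle\perp\langle\langle c,\beta\rangle\rangle\bigr)\otimes\rho$ has dimension at most $3\cdot 2^{n}$. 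Writing $\rho=\langle\langle d_1,\dots,d_{n-2}\rangle\rangle\otimes[1,e]$ (for $n=2$ there are no $d_i$ and $\rho=[1,e]$) and pulling out the bilinear factor $\langle\langle d_1,\dots,d_{n-2}\rangle\rangle$, everything comes down to showing that the two $3$-fold quadratic Pfister forms $\langle\langle b,\alpha,e]]$ and $\langle\langle c,\beta,e]]$, which share the quadratic slot $e$, are inseparably $2$-linked: a common $2$-fold bilinear Pfister factor $\langle\langle g,h\rangle\rangle$ of these, tensored with $\langle\langle d_1,\dots,d_{n-2}\rangle\rangle$, is a common $n$-fold bilinear Pfister divisor of $\varphi$ and $\psi$, so $\varphi$ and $\psi$ are inseparably — hence separably — $n$-linked.

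The main obstacle is exactly this last step. For $n=2$ the two $3$-fold forms are $\varphi$ and $\psi$ themselves, and \Lref{Faivre} applies because $F$ is a linked field by hypothesis and $I_q^4 F=0$ by \Tref{Linkage2}; so the case $n=2$ is complete. For $n>2$ the difficulty is that $sl_2^n(F)=1$ does not visibly make $F$ a linked field, so one cannot invoke \Lref{Faivre} directly. My plan to finish is an induction on $n$ combined with a dimension count: if some pair $\varphi,\psi\in P_{n+1}(F)$ is not separably $n$-linked, then by the above the anisotropic part of $\varphi\perp\psi$ is an anisotropic form of dimension $3\cdot 2^{n}$ in $I_q^{n+1}F$, so $u^{n+1}(F)\geq 3\cdot 2^{n}$; moreover, in a minimal symbol-length expansion of a class of symbol length $\geq 2$ no two summands can be separably $n$-linked (else they combine into one Pfister form modulo $I_q^{n+2}F=0$, shortening the expansion), while all pairs are separably $(n-1)$-linked, again producing anisotropic forms of dimension $3\cdot 2^{n}$ in $I_q^{n+1}F$. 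The task is to play this against \Tref{GoodBound} (which, since $sl_2^n(F)=1$, bounds $sl_2^{n+1}(F)$ by $\frac{u^{n+1}(F)}{2}+1-2^{n}$) — or, alternatively, to show directly that $sl_2^n(F)=1$ forces every two $2$-fold \emph{bilinear} Pfister forms over $F$ to be $1$-linked, which would let one close the argument exactly as in the case $n=2$ — and making one of these two routes go through is the real content of the proof.
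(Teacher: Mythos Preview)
Your argument is complete only for $n=2$; for $n\geq 3$ you explicitly leave the ``real content'' unfinished, and neither of your two proposed routes closes the gap. Route~1 (via \Tref{GoodBound}) is circular: knowing $sl_2^n(F)=1$ gives $sl_2^{n+1}(F)\leq \frac{u^{n+1}(F)}{2}+1-2^n$, but to force $sl_2^{n+1}(F)\leq 1$ you would need $u^{n+1}(F)\leq 2^{n+1}$, which is exactly the assertion that every pair in $P_{n+1}(F)$ is separably $n$-linked. Route~2 is both unjustified (there is no evident mechanism by which $sl_2^n(F)=1$ controls linkage of \emph{bilinear} $2$-fold Pfister forms) and insufficient: a common $1$-fold bilinear slot of $\langle\langle b,\alpha\rangle\rangle$ and $\langle\langle c,\beta\rangle\rangle$ only yields inseparable $1$-linkage of $\langle\langle b,\alpha,e]]$ and $\langle\langle c,\beta,e]]$, not the inseparable $2$-linkage you need, and you cannot invoke \Lref{Faivre} without knowing that $F$ itself is linked.

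The missing idea is much simpler than either route and is purely combinatorial: use the hypothesis \emph{twice}. After reaching $\varphi=\langle\langle b,\alpha\rangle\rangle\otimes\rho$ and $\psi=\langle\langle c,\beta\rangle\rangle\otimes\rho$ with $\rho\in P_{n-1}(F)$, do not fully decompose $\rho$; peel off a single bilinear slot, $\rho=\langle\langle\delta\rangle\rangle\otimes\pi'$ with $\pi'\in P_{n-2}(F)$ (here $n\geq 3$). Then $\langle\langle b,\alpha\rangle\rangle\otimes\pi'$ and $\langle\langle c,\beta\rangle\rangle\otimes\pi'$ lie in $P_n(F)$, so the separable-linkage hypothesis applies \emph{again}: they share a common $\rho'\in P_{n-1}(F)$, say $\langle\langle a\rangle\rangle\otimes\rho'$ and $\langle\langle a'\rangle\rangle\otimes\rho'$. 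Tensoring back with $\langle\langle\delta\rangle\rangle$ gives $\varphi=\langle\langle\delta,a\rangle\rangle\otimes\rho'$ and $\psi=\langle\langle\delta,a'\rangle\rangle\otimes\rho'$, so $\varphi$ and $\psi$ share the $n$-fold Pfister $\langle\langle\delta\rangle\rangle\otimes\rho'$ and are separably $n$-linked. Your own reduction (via $I_q^{n+2}F=0$ and \Pref{septoinsep}) then upgrades this to inseparable $n$-linkage. The point you overlooked is that dropping one slot of $\rho$ brings you back to $P_n(F)$, exactly where the hypothesis lives.
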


\begin{proof}
The case of $n=2$ follows from Theorem \ref{Linkage2} and Lemma \ref{Faivre}.
Assume $n \geq 3$.
Then consider $\varphi,\psi \in P_{n+1}(F)$.
They can be written as $\varphi=\langle \langle \beta_\varphi \rangle \rangle \otimes \varphi'$ and $\psi=\langle \langle \beta_\psi \rangle \rangle \otimes \psi'$ for some $\beta_\varphi, \beta_\psi \in F^\times$ and $\varphi',\psi' \in P_n(F)$. Since $I_q^n F$ is separably linked, we can write $\varphi'=\langle \langle \gamma_\varphi \rangle \rangle \otimes \pi$ and $\psi'=\langle \langle \gamma_\psi \rangle \rangle \otimes \pi$ for some $\gamma_\varphi,\gamma_\psi \in F^\times$ and $\pi \in P_{n-1}(F)$. Write $\pi=\langle \langle \delta \rangle \rangle \otimes \pi'$ for some $\delta \in F^\times$ and $\pi' \in P_{n-2}(F)$. Consider the forms $\langle \langle \beta_\varphi,\gamma_\varphi \rangle \rangle \otimes \pi'$ and $\langle \langle \beta_\psi,\gamma_\psi \rangle \rangle \otimes \pi'$. Both are in $P_n(F)$, and since $I_q^n F$ is separably linked, they can be written as $\langle \langle \alpha_\varphi \rangle \rangle \otimes \rho$ and $\langle \langle \alpha_\psi \rangle \rangle \otimes \rho$ for some $\alpha_\varphi,\alpha_\psi \in F^\times$ and $\rho \in P_{n-1}(F)$.
Consequently, $\varphi=\langle \langle \alpha_\varphi,\delta \rangle \rangle \otimes \rho$ and $\psi=\langle \langle \alpha_\psi,\delta \rangle \rangle \otimes \rho$. Therefore $\varphi$ and $\psi$ are separably $n$-linked. 
Since $I_q^n F$ is separably linked, $I_q^{n+2} F=0$ by Proposition \ref{VanishingI^n}.
Since $\varphi$ and $\psi$ are separably $n$-linked and $I_q^{n+2} F=0$, $\varphi$ and $\psi$ are inseparably $n$-linked by Proposition \ref{septoinsep}.
\end{proof}

\section{Linked fields and $u^n(F)$}

In this section we show the restriction that having $sl_2^n(F)=1$ imposes on $u^n(F)$.

\begin{thm}\label{Theoremu}
Suppose $n \geq 2$, $\operatorname{char}(F)=2$, $I_q^n F \neq 0$ and $I_q^n F$ is separably linked. Then $u^n(F)$ is either $2^n$ or $2^{n+1}$. Moreover, every anisotropic form $\varphi \in I_q^n F$ of dimension $u^n(F)$ is Witt equivalent to $\pi \perp \psi$ where $\pi \in P_n(F)$ and $\psi \in P_{n+1}(F)$.
\end{thm}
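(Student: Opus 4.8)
The plan is to start from an anisotropic $\varphi \in I_q^n F$ of the largest possible dimension $u^n(F)$ and bootstrap the separable linkage hypothesis (together with the consequences collected in the previous section) into a structural description of $\varphi$. First I would record the easy lower bound: since $I_q^n F \neq 0$ and every anisotropic Pfister form has dimension $2^n$, we have $u^n(F) \geq 2^n$. For the upper bound, I would use \Pref{lift}: $I_q^n F$ separably linked forces $I_q^{n+2} F = 0$ by \Pref{VanishingI^n}, and moreover $I_q^{n+1} F$ is inseparably linked. The key reduction is to show that a class in $I_q^n F / I_q^{n+1} F = H_2^n(F)$ has symbol length $1$ (this is \Pref{corres}), so $\varphi$ is Witt equivalent modulo $I_q^{n+1} F$ to a single $\pi \in P_n(F)$; hence $\varphi \perp \pi \in I_q^{n+1} F$ (using $-\pi = \pi$ in $W_q F$ up to the relevant identifications), i.e. $\varphi$ is Witt equivalent to $\pi \perp \eta$ for some $\eta \in I_q^{n+1} F$.

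Next I would analyze $\eta$. Since $I_q^{n+1} F$ is separably linked (again \Pref{lift}), applying \Pref{corres} at level $n+1$ gives $sl_2^{n+1}(F) = 1$, so $\eta$ is Witt equivalent modulo $I_q^{n+2} F$ to a single $\psi \in P_{n+1}(F)$; but $I_q^{n+2} F = 0$, so $\eta$ is Witt equivalent to $\psi$ outright (or to $0$, the degenerate case). Thus $\varphi \sim_{Witt} \pi \perp \psi$ with $\pi \in P_n(F)$, $\psi \in P_{n+1}(F)$, which is exactly the "moreover" claim. It remains to bound $\dim \varphi$. The anisotropic part of $\pi \perp \psi$ has dimension $2^n + 2^{n+1} - 2 i_W(\pi \perp \psi)$, and by \Pref{Wittindex} the Witt index is $2^r$ where $r$ is the maximal integer for which $\pi$ and $\psi$ are separably $r$-linked. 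Since $\psi \in P_{n+1}(F)$ we may write $\psi = \langle\langle \beta \rangle\rangle \otimes \psi'$ with $\psi' \in P_n(F)$, and by separable linkage of $I_q^n F$ the forms $\pi$ and $\psi'$ are separably $(n-1)$-linked, so $\pi$ and $\psi$ are separably $(n-1)$-linked, giving $r \geq n-1$ and $i_W(\pi \perp \psi) \geq 2^{n-1}$ (unless $\pi \perp \psi$ is hyperbolic). Therefore $\dim \varphi_{an} \leq 2^n + 2^{n+1} - 2^n = 2^{n+1}$. Combined with $u^n(F) \geq 2^n$, the possible values of $i_W \in \{2^{n-1}, 2^n, \dots\}$ translate into anisotropic dimension in $\{2^{n+1}, 2^n, 0\}$; since $I_q^n F \neq 0$ rules out $u^n(F) = 0$, we get $u^n(F) \in \{2^n, 2^{n+1}\}$.

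The main obstacle I anticipate is the bookkeeping around Witt equivalence versus isometry and the degenerate sub-cases: one must be careful that "$\varphi$ Witt equivalent to $\pi \perp \psi$" is consistent with $\varphi$ already being anisotropic (so $\varphi$ is the anisotropic part of $\pi \perp \psi$), and one must handle the possibilities that $\pi \perp \psi$ is hyperbolic, that $\psi$ is hyperbolic (so $\eta = 0$ and $\varphi \sim_{Witt} \pi$ has dimension $2^n$), or that $\pi$ and $\psi$ are linked at a level higher than $n-1$. A second delicate point is invoking \Pref{Wittindex}, which is stated for a genuine orthogonal sum of two Pfister forms, so I need $\pi$ and $\psi$ to be actual Pfister forms rather than Witt classes — which they are by construction — and then read off the anisotropic dimension. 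Finally, the step extracting $\psi'$ from $\psi$ and linking it to $\pi$ should be routine given that $I_q^n F$ is separably linked, but one should double-check the linkage level bookkeeping: separable $(n-1)$-linkage of $\pi$ and $\psi'$ inside $P_n(F)$ does upgrade to separable $(n-1)$-linkage of $\pi$ and $\psi$ inside the comparison $P_n \perp P_{n+1}$, which is all that \Pref{Wittindex} needs.
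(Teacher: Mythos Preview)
Your proposal is correct and follows essentially the same architecture as the paper's proof: reduce $\varphi$ modulo $I_q^{n+1} F$ to a single $\pi\in P_n(F)$ via \Pref{corres}, show the remainder in $I_q^{n+1} F$ is Witt equivalent to a single $\psi\in P_{n+1}(F)$, and then invoke \Pref{Wittindex} together with the separable $(n-1)$-linkage of $\pi$ and $\psi$ to pin down the anisotropic dimension. The only minor deviation is in the middle step---you collapse the remainder to one Pfister form by re-applying \Pref{corres} at level $n+1$ and using $I_q^{n+2} F=0$, whereas the paper uses inseparable linkage of $I_q^{n+1} F$ and cites \cite[Proposition 3.6]{ChapmanGilatVishne:2017}; your factorization $\psi=\langle\langle\beta\rangle\rangle\otimes\psi'$ to justify the $(n-1)$-linkage of $\pi$ and $\psi$ also spells out a detail the paper simply asserts.
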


\begin{proof}
Let $\varphi$ be an anisotropic form in $I_q^n F$.
Every form in $I_q^n F$ is Witt equivalent to a sum of scalar multiples of $n$-fold Pfister forms. Since every scalar multiple of an $n$-fold Pfister form is congruent modulo $I_q^{n+1} F$ to the $n$-fold Pfister form, $\varphi$ can be written as the sum of $n$-fold Pfister forms plus something from $I_q^{n+1} F$.
Since $I_q^n F$ is separably linked, $\varphi$ is congruent modulo $I_q^{n+1} F$ to some $\pi \in P_n(F)$ (see Proposition \ref{corres}).
Therefore $\varphi=\pi \perp \psi$ for some form $\psi \in I_q^{n+1} F$.
Now, $\psi$ is the sum of scalar multiples of forms in $P_{n+1}(F)$. But $I_q^{n+2} F=0$, so every scalar multiple of a form in $P_{n+1}(F)$ is actually in $P_{n+1}(F)$. Therefore $\psi$ is the sum of forms in $P_{n+1}(F)$.
But $I_q^{n+1} F$ is inseparably linked, so the sum of every two $(n+1)$-fold Pfister forms is Witt equivalent to an $(n+1)$-fold Pfister form (see \cite[Proposition 3.6]{ChapmanGilatVishne:2017}), which means that $\psi \in P_{n+1}(F)$.
Since $I_q^n F$ is separably linked, $\pi$ and $\psi$ are separably $(n-1)$-linked. If they are $n$-linked (which means that $\pi$ is a subform of $\psi$) then the dimension of $\varphi$ is $2^n$. Otherwise it is $2^{n+1}$ by Proposition \ref{Wittindex}.
\end{proof}

\begin{cor}\label{coru}
Suppose $\operatorname{char}(F)=2$, $I_q^n F \neq 0$ and $n \geq 2$. Then $I_q^n F$ is inseparably linked if and only if $u^n(F)=2^n$.
\end{cor}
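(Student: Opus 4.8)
The plan is to deduce both implications from Theorem~\ref{Theoremu} together with Propositions~\ref{VanishingI^n}, \ref{septoinsep} and~\ref{Wittindex}, the common first observation being that in either direction one has $I_q^{n+1}F=0$.

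\emph{The implication ``inseparably linked $\Rightarrow u^n(F)=2^n$''.} Assume $I_q^n F$ is inseparably linked. Since inseparable $(n-1)$-linkage implies separable $(n-1)$-linkage, $I_q^n F$ is also separably linked, so Theorem~\ref{Theoremu} applies and gives both $u^n(F)\in\{2^n,2^{n+1}\}$ and the fact that any anisotropic $\varphi\in I_q^n F$ with $\dim\varphi=u^n(F)$ is Witt equivalent to $\pi\perp\psi$ for some $\pi\in P_n(F)$ and $\psi\in P_{n+1}(F)$. By Proposition~\ref{VanishingI^n}, $I_q^{n+1}F=0$; hence $\psi\in P_{n+1}(F)\subseteq I_q^{n+1}F$ is hyperbolic and $\varphi$ is Witt equivalent to $\pi$. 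Taking $\varphi\neq 0$ (possible because $I_q^n F\neq 0$ and $u^n(F)<\infty$) forces $\pi$ to be anisotropic, whence $\varphi\cong\pi$ and $u^n(F)=\dim\pi=2^n$.

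\emph{The implication ``$u^n(F)=2^n \Rightarrow$ inseparably linked''.} Assume $u^n(F)=2^n$. A nonzero anisotropic form in $I_q^{n+1}F\subseteq I_q^n F$ would have dimension at least $2^{n+1}>u^n(F)$ by the Arason--Pfister Hauptsatz \cite[Theorem 23.8]{EKM}, so $I_q^{n+1}F=0$; by Proposition~\ref{septoinsep} it then suffices to prove that any two $\varphi,\psi\in P_n(F)$ are separably $(n-1)$-linked. The form $\varphi\perp\psi$ has dimension $2^{n+1}$, and its anisotropic part represents a class of $I_q^n F$, so by the Hauptsatz and the equality $u^n(F)=2^n$ this anisotropic part has dimension $0$ or $2^n$. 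If it is $0$, then $\varphi\perp\psi$ is hyperbolic, so $\varphi$ and $\psi$ are Witt equivalent (recall $W_q F$ has exponent $2$) and hence isometric, so they are separably $(n-1)$-linked (split off a common $1$-fold bilinear Pfister factor). If it is $2^n$, then $i_W(\varphi\perp\psi)=(2^{n+1}-2^n)/2=2^{n-1}$, and Proposition~\ref{Wittindex} yields that $\varphi$ and $\psi$ are separably $(n-1)$-linked. Thus $I_q^n F$ is separably linked, and therefore inseparably linked by Proposition~\ref{septoinsep}.

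\emph{Expected difficulty.} No step is deep; this is an assembly of the results recalled above. The only delicate part is the dimension bookkeeping in the second implication --- pinning the anisotropic part of $\varphi\perp\psi$ to dimension $0$ or $2^n$, and noticing that the hyperbolic case causes no trouble because hyperbolic quadratic forms of equal dimension are isometric.
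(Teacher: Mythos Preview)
Your proof is correct and follows essentially the same route as the paper's: both directions use Theorem~\ref{Theoremu}, Proposition~\ref{VanishingI^n}, Proposition~\ref{septoinsep}, and Proposition~\ref{Wittindex} in the same way. Your backward direction is slightly more explicit than the paper's in separating the hyperbolic case of $\varphi\perp\psi$ from the case $i_W(\varphi\perp\psi)=2^{n-1}$, but the argument is the same.
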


\begin{proof}
If $I_q^n F$ is inseparably linked then it is separably linked and so $u^n(F)=2^n$ or $2^{n+1}$. Every form $\varphi \in I_q^n F$ of dimension $u^n(F)$ is Witt equivalent to $\pi \perp \psi$ where $\pi \in P_n(F)$ and $\psi \in P_{n+1}(F)$. Since $I_q^n F$ is inseparably linked, $I_q^{n+1} F=0$ by Proposition \ref{VanishingI^n}, and so $\varphi=\pi$ and $u^n(F)=2^n$.
In the opposite direction, if $u^n(F)=2^n$ then the Witt index of the sum of any two $n$-fold Pfister forms is at least $2^n$, which by Proposition \ref{Wittindex} means that $I_q^n F$ is separably linked. Since $u^n(F)=2^n$, we have $I_q^{n+1} F=0$, and so by Propositions \ref{VanishingI^n} and \ref{septoinsep}, $I_q^n F$ is inseparably linked.
\end{proof}

\begin{lem}\label{WittindexLem}
Suppose $\operatorname{char}(F)=2$, $n \geq 2$ and $I_q^n F$ is separably linked. Let $\pi \in P_n(F)$ and $\psi \in P_{n+1}(F)$.
Then $i_W(\psi \perp \pi \perp \langle 1 \rangle) \geq 2^{n-1}+1$.
\end{lem}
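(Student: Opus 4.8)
The plan is to collapse $\psi\perp\pi$, modulo Witt equivalence, onto four scalar copies of a single quadratic $(n-1)$-fold Pfister form $\rho$, and then to use $I_q^{n+2}F=0$ to rewrite this $2^{n+1}$-dimensional form into one that visibly contains $\rho$; since $\rho$ represents $1$, the remaining summand $\langle 1\rangle$ can then be absorbed into the Witt index.

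First I would use the recursive definition of Pfister forms to write the $(n+1)$-fold Pfister form as $\psi=\langle\langle b\rangle\rangle\otimes\psi'$ with $b\in F^\times$ and $\psi'\in P_n(F)$, where $\langle\langle b\rangle\rangle$ denotes the bilinear form $\langle 1,b\rangle$. Since $I_q^nF$ is separably linked, the $n$-fold Pfister forms $\psi'$ and $\pi$ are separably $(n-1)$-linked, so there exist $a,c\in F^\times$ and $\rho\in P_{n-1}(F)$ with $\pi=\langle\langle a\rangle\rangle\otimes\rho$ and $\psi'=\langle\langle c\rangle\rangle\otimes\rho$; hence $\psi=\langle\langle b,c\rangle\rangle\otimes\rho$. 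Expanding the bilinear factors gives $\psi\perp\pi\cong(\rho\perp\rho)\perp a\rho\perp b\rho\perp c\rho\perp bc\rho$, and since $\operatorname{char}(F)=2$ the form $\rho\perp\rho$ is hyperbolic, so $\psi\perp\pi$ is Witt equivalent to the $2^{n+1}$-dimensional nonsingular form $a\rho\perp b\rho\perp c\rho\perp bc\rho$.

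Next I would invoke Proposition \ref{VanishingI^n}: separable linkage of $I_q^nF$ forces $I_q^{n+2}F=0$, so the quadratic $(n+2)$-fold Pfister form $\langle\langle a,b,c\rangle\rangle\otimes\rho$ is hyperbolic. Writing $\langle\langle a,b,c\rangle\rangle=\langle 1,a,b,ab,c,ac,bc,abc\rangle$ and splitting this hyperbolic form into the summands indexed by $\{a,b,c,bc\}$ and by $\{1,ab,ac,abc\}$ shows that $a\rho\perp b\rho\perp c\rho\perp bc\rho$ is Witt equivalent to $\rho\perp ab\rho\perp ac\rho\perp abc\rho$. Combining this with the previous paragraph, $\psi\perp\pi\perp\langle 1\rangle$ is Witt equivalent to $\rho\perp ab\rho\perp ac\rho\perp abc\rho\perp\langle 1\rangle$, which contains the isotropic subform $\rho\perp\langle 1\rangle$ (as $\rho$ represents $1$ and $1+1=0$) and therefore has Witt index at least $1$. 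Finally, $\dim(\psi\perp\pi\perp\langle 1\rangle)=3\cdot 2^n+1$ while $\dim(\rho\perp ab\rho\perp ac\rho\perp abc\rho\perp\langle 1\rangle)=2^{n+1}+1$, so the Witt indices of these two Witt-equivalent forms differ by exactly $2^{n-1}$, giving $i_W(\psi\perp\pi\perp\langle 1\rangle)\geq 2^{n-1}+1$.

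The step needing care is this last dimension-versus-Witt-index comparison, since adjoining $\langle 1\rangle$ produces a form with a one-dimensional quasilinear part; but $\langle 1\rangle$ is anisotropic, so Witt-equivalent forms still have isometric anisotropic parts and the difference of their Witt indices is half the difference of their dimensions, which makes the bookkeeping routine. I expect the genuinely load-bearing move to be the rewriting via $I_q^{n+2}F=0$: it converts a $2^{n+1}$-dimensional form known only abstractly to lie in $I_q^{n+1}F$ into one that manifestly contains a copy of $\rho$, which is exactly what is needed to absorb the extra $\langle 1\rangle$.
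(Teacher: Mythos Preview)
Your proof is correct and follows essentially the same route as the paper: write $\psi=\langle\langle b,c\rangle\rangle\otimes\rho$ and $\pi=\langle\langle a\rangle\rangle\otimes\rho$ via separable linkage, strip off $2^{n-1}$ hyperbolic planes from $\psi\perp\pi$, and then use $I_q^{n+2}F=0$ (from Proposition~\ref{VanishingI^n}) to show the remaining $2^{n+1}$-dimensional piece becomes isotropic once $\langle 1\rangle$ is adjoined. The only difference is in how the last isotropy is argued: the paper observes directly that $\langle a,b,c,bc\rangle\otimes\rho\perp\langle 1\rangle$ is a Pfister neighbor of the hyperbolic $(n+2)$-fold form $\langle\langle a,b,c\rangle\rangle\otimes\rho$ and hence isotropic, whereas you use the hyperbolicity of that same $(n+2)$-fold form to rewrite $\langle a,b,c,bc\rangle\otimes\rho$ as $\langle 1,ab,ac,abc\rangle\otimes\rho$ and then exhibit an explicit isotropic vector in $\rho\perp\langle 1\rangle$. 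These are two phrasings of the same computation; your caution about the quasilinear summand $\langle 1\rangle$ is warranted but, as you note, harmless since the form remains nondegenerate and Witt decomposition still applies.
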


\begin{proof}
Since $I_q^n F$ is separably linked, $\pi=\langle \langle \alpha \rangle \rangle \otimes \rho$ and $\psi=\langle \langle \beta,\gamma \rangle \rangle \otimes \rho$ for some $\alpha,\beta,\gamma \in F^\times$ and $\rho \in P_{n-1}(F)$. Then $\psi \perp \pi \perp \langle 1 \rangle$ is isometric to $2^{n-1} \times \varmathbb{H} \perp \langle \alpha,\beta,\gamma,\beta \gamma \rangle \otimes \rho \perp \langle 1 \rangle$. The form $\langle \alpha,\beta,\gamma,\beta \gamma \rangle \otimes \rho \perp \langle 1 \rangle$ is a Pfister neighbor of $\langle \langle \alpha,\beta,\gamma \rangle \rangle \otimes \rho \in P_{n+2}(F)$. The latter is hyperbolic because $I_q^{n+2} F=0$ by Proposition \ref{VanishingI^n}, and therefore $\langle \alpha,\beta,\gamma,\beta \gamma \rangle \otimes \rho \perp \langle 1 \rangle$ is isotropic, which means $i_W(\psi \perp \pi \perp \langle 1 \rangle) \geq 2^{n-1}+1$.
\end{proof}

\begin{rem}
By \cite[Th\`{e}or\'{e}me 2.5.5]{Faivre:thesis}, if two quadratic Pfister forms $\pi$ and $\psi$ satisfy $i_W(\pi \perp \psi \perp \langle 1 \rangle) \geq 2^{n-1}+1$ then $\pi$ and $\psi$ are inseparably $(n-1)$-linked.
As a result, by Lemma \ref{WittindexLem}, if $\operatorname{char}(F)=2$, $n \geq 2$ and $I_q^n F$ is separably linked, then for any $\pi \in P_n(F)$ and $\psi \in P_{n+1}(F)$, $\pi$ and $\psi$ are inseparably $(n-1)$-linked.
\end{rem}

\begin{thm}\label{Theoremd}
Suppose $\operatorname{char}(F)=2$, $I_q^n F \neq 0$ and $n \geq 2$ץ Let $d$ be the maximal dimension of an anisotropic quadratic form Witt equivalent to some $\varphi \perp [1,\alpha]$ where $\varphi \in I_q^n F$ and $\alpha \in F$.
If $I_q^n F$ is separably linked then $d=u^n(F)$.
\end{thm}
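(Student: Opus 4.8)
The plan is to prove $d=u^n(F)$ by the two inequalities $d\geq u^n(F)$ and $d\leq u^n(F)$, the first being immediate and the content lying in the second. For $d\geq u^n(F)$ I would pick an anisotropic $\varphi\in I_q^nF$ with $\dim\varphi=u^n(F)$ and take $\alpha=0$: since $[1,0]$ is isotropic, hence hyperbolic, $\varphi\perp[1,0]$ is Witt equivalent to the anisotropic form $\varphi$ of dimension $u^n(F)$, so $d\geq u^n(F)$.

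For $d\leq u^n(F)$ I would argue as follows. Fix $\varphi\in I_q^nF$ and $\alpha\in F$. Replacing $\varphi$ by its anisotropic part (which is again in $I_q^nF$ and does not change the Witt class of $\varphi\perp[1,\alpha]$) one may assume $\varphi$ is anisotropic, so $\dim\varphi\leq u^n(F)$. By Theorem \ref{Theoremu} the integer $u^n(F)$ is even, and so is $\dim\varphi$; hence either $\dim\varphi\leq u^n(F)-2$, in which case $\dim(\varphi\perp[1,\alpha])\leq u^n(F)$ and there is nothing to prove, or $\dim\varphi=u^n(F)$. In the remaining case it suffices to show that $\varphi\perp[1,\alpha]$ is isotropic, and for this I will prove the stronger fact that $\varphi$ represents a nonzero square $c^2$: the binary form $[1,\alpha]$ represents $c^2$ at the vector $(c,0)$, so combining a vector on which $\varphi$ takes the value $c^2$ with $(c,0)$ yields a nonzero vector of $\varphi\perp[1,\alpha]$ on which the form takes the value $c^2+c^2=0$.

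So the heart of the argument is: an anisotropic $\varphi\in I_q^nF$ with $\dim\varphi=u^n(F)$ represents a nonzero square. By Theorem \ref{Theoremu}, $u^n(F)\in\{2^n,2^{n+1}\}$ and $\varphi$ is Witt equivalent to $\pi\perp\psi$ with $\pi\in P_n(F)$ and $\psi\in P_{n+1}(F)$. If $u^n(F)=2^n$, then $I_q^nF$ is inseparably linked by Corollary \ref{coru}, so $I_q^{n+1}F=0$ by Proposition \ref{VanishingI^n} and $\psi$ is hyperbolic; hence $\varphi$ is Witt equivalent, and, both being anisotropic of dimension $2^n$, isometric, to the $n$-fold quadratic Pfister form $\pi$, which represents $1=1^2$. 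If $u^n(F)=2^{n+1}$, then Lemma \ref{WittindexLem} gives $i_W(\pi\perp\psi\perp\langle 1\rangle)\geq 2^{n-1}+1$; since $\varphi\perp\langle 1\rangle$ is Witt equivalent to $\pi\perp\psi\perp\langle 1\rangle$, its anisotropic part has dimension at most $(2^n+2^{n+1}+1)-2(2^{n-1}+1)=2^{n+1}-1<2^{n+1}+1=\dim(\varphi\perp\langle 1\rangle)$, so $\varphi\perp\langle 1\rangle$ is isotropic. An isotropic vector of $\varphi\perp\langle 1\rangle$ has the form $(v,c)$ with $\varphi(v)=c^2$; since $\varphi$ is anisotropic we get $v\neq 0$, so $c^2=\varphi(v)$ is a nonzero square, as required. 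Combining the two inequalities gives $d=u^n(F)$, which also shows $d$ is well defined.

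The step I expect to be the main obstacle is this last one: reducing to the critical dimension $\dim\varphi=u^n(F)$ and forcing $\varphi\perp[1,\alpha]$ to be isotropic there. The two ideas that make it go through are (i) that it is enough for $\varphi$ to represent a single nonzero square, because $[1,\alpha]$ represents every square regardless of $\alpha$, and (ii) that the isotropy of $\varphi\perp\langle 1\rangle$ can be extracted from the already-established Lemma \ref{WittindexLem} on $\psi\perp\pi\perp\langle 1\rangle$ by a crude dimension count — this is exactly where the separable-linkage hypothesis enters, through the structure theorem (Theorem \ref{Theoremu}) and Lemma \ref{WittindexLem}. Along the way one should verify the routine facts that the anisotropic part of a form in $I_q^nF$ remains in $I_q^nF$ and that passing to anisotropic parts is compatible with $\perp\langle 1\rangle$ up to hyperbolic summands.
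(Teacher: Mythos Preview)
Your proof is correct and follows essentially the same route as the paper: both reduce to an anisotropic $\varphi$ of dimension $u^n(F)$, split into the cases $u^n(F)=2^n$ and $u^n(F)=2^{n+1}$ via Theorem~\ref{Theoremu}, and in the second case use Lemma~\ref{WittindexLem} to force $\varphi\perp\langle 1\rangle$ (hence $\varphi\perp[1,\alpha]$) to be isotropic. Your intermediate formulation ``$\varphi$ represents a nonzero square'' is a harmless repackaging of the paper's observation that $\varphi\perp\langle 1\rangle$ is isotropic; in the $2^n$ case the paper notes $\varphi$ is universal whereas you only need that the Pfister form $\pi$ represents~$1$, which is of course weaker and suffices.
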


\begin{proof}
Clearly $u^n(F) \leq d \leq u^n(F)+2$.
When $u^n(F)=2^n$, $\varphi$ is a scalar multiple of an $n$-fold Pfister form, and since $I_q^{n+1} F=0$, $\varphi$ is universal. Therefore $i_W(\varphi \perp [1,\alpha]) \geq 1$, and $d \leq 2^n=u^n(F)$.

Assume $u^n(F)=2^{n+1}$. Assume the contrary, that $d=u^n(F)+2$. Then there exists an anisotropic form $\tau=\varphi \perp [1,\alpha]$ where $\varphi$ is of dimension $2^{n+1}$ in $I_q^n F$. Recall that $\varphi$ is Witt equivalent to $\pi \perp \psi$ for some $\pi \in P_n(F)$ and $\psi \in P_{n+1}(F)$. Since $i_W(\pi \perp \psi \perp \langle 1 \rangle) \geq 2^{n-1}+1$ by Lemma \ref{WittindexLem}, $\varphi \perp \langle 1 \rangle$ is isotropic. However, the latter is a subform of $\varphi \perp [1,\alpha]$, contradictory to the assumption that $\tau$ is anisotropic.
\end{proof}

When $n=2$, the number $d$ appearing in Theorem \ref{Theoremd} is in fact $u(F)$.
Then Theorem \ref{Theoremd} states that if $F$ is a linked field of $\operatorname{char}(F)=2$ then $u^2(F)=u(F)$.
Plugging that in Theorem \ref{Theoremu} gives that if $F$ is a linked field and $I_q^2 F \neq 0$ then $u(F)$ is either 4 or 8.
This recovers \cite[Corollary 5.2]{ChapmanDolphin:2017}.
This is the characteristic 2 analogue of \cite[Main Theorem]{ElmanLam:1973} which states that if $F$ is a linked field of $\operatorname{char}(F) \neq 2$ then $u(F)$ is either $0,1,2,4$ or 8.
Plugging $n=2$ in Corollary \ref{coru} gives that when $I_q^2 F$ is separably linked, $I_q^2 F$ is inseparably linked if and only if $u(F)=4$.
This is exactly \cite[Theorem 3.1]{Baeza:1982}.

When $n=3$, the number $d$ appearing in Theorem \ref{Theoremd} is actually the invariant $cu(F)$ defined in \cite{Kashima:1996} to be the maximal dimension of a nonsingular form with trivial Clifford invariant.
Therefore, by \ref{Theoremd}, if $I_q^3 F$ is a separably linked field, $\operatorname{char}(F)=2$ and $I_q^3 F \neq 0$ then $cu(F)=8$ or 16, and $cu(F)=8$ if and only if $I_q^3 F$ is inseparably linked.
This provides a characteristic 2 analogue to \cite[Theorem 1.1 (1)]{Kashima:1996}, which states that given a field $F$ of $\operatorname{char}(F) \neq 2$, if $I_q^3 F$ is linked then $cu(F)$ is 1,2,8 or 16.

\section{Acknowledgments}
The authors thank Adrian Wadsworth for the helpful comments on the manuscript.
The first author was visiting Perimeter Institute of Theoretical Physics during the Summer of 2017, during which most of the work on this project was carried out.

\bibliographystyle{abbrv}
\bibliography{bibfile}

\end{document}